\documentclass[a4paper,12pt,twoside]{article}
\input{preamble}

\title{Feedback vertex sets in oriented graphs\footnote{This research was partially funded by the French National Research Agency (ANR) under grant agreement No. ANR-24-CE48-3758-01. In accordance with the objective of open access dissemination, the author applies a Creative Commons Attribution (CC-BY) license to any accepted article or manuscript (AAM) resulting from this submission.}}
\author[1]{Simon Dreyer}
\affil[1]{LIRMM, Université de Montpellier, CNRS, Montpellier, France.}

\begin{document}
\maketitle

\begin{abstract}
  For an oriented graph $G$, denote by $\fv(G)$ the minimum number of vertices whose deletion from $G$ makes it acyclic. We show that an oriented graph $G$ on $n$ vertices and $m$ arcs satisfies $\fv(G) \le \frac{2n+m+h}{9}$ where $h$ denotes the number of connected components of $G$ that belong to a special class of oriented graphs. This result has three consequences. First, when $G$ is planar, we obtain that $\fv(G) \le \frac{2n+m}{9}$. In particular, this implies that $\fv(G) \le \frac{5n-6}{9}$ for any planar oriented graph $G$, improving the best known upper bound of $\frac{3n}{5}$~[Borodin, Discrete Mathematics, 1979]. Then, applying this inequality to the planar digraphs without directed triangles, we get that $\fv(G) \le \frac{6n-8}{13}$, which improves the current best bound of $\frac{n}{2}$~[Li and Mohar, SIAM Journal on Discrete Mathematics, 2017]. Finally, when $G$ has maximum degree 6, we have $\fv(G) \le \frac{4n}{7}$ and this bound is tight, answering a conjecture of Ai, Gutin, Liu, Yeo and Zhou~[arXiv:2512.01676, 2025].
\end{abstract}

\section{Introduction}

Given a graph $G=(V,E)$, oriented or not, a \emph{feedback vertex set} is a set $S\subseteq V$ of vertices of $G$ such that the subgraph induced by $V\setminus S$ contains no cycle; in the oriented case, this means that it contains no directed cycle. In this case, we also say that $V\setminus S$ is an \emph{induced acyclic subgraph}. Computing the \emph{minimum size of a feedback vertex set}, denoted by $\fv(G)$, is a hard problem even for planar graphs (see~\cite{Karp72, gareyjohnson1979}) and has been extensively studied in the literature.
 
The smallest size of a feedback vertex set of $G$ is called the \emph{decycling number} of $G$, and the highest order of an induced acyclic subgraph of $G$ is called the \emph{acyclic number} of $G$, denoted respectively by $\fv(G)$ and $a(G)$. Note that the sum of the decycling number and the acyclic number of $G$ is equal to the order of $G$ i.e. $|V(G)| = a(G) + \fv(G)$.

For planar graphs, by the Four Color Theorem, it is known that every planar graph on $n$ vertices contains an independent set of vertices of size $\frac{n}{4}$. A widely open conjecture proposes a strengthening of this result:

\begin{conjecture}[Albertson and Berman, 1976~\cite{AB76}]\label{conj:albertson-berman}
    Every simple undirected planar graph $G$ on $n$ vertices has $\fv(G) \le \frac{n}{2}$.
\end{conjecture}

Equivalently, \Cref{conj:albertson-berman} states that every undirected planar graph $G$ on $n$ vertices has an induced acyclic subgraph of at least $\frac{n}{2}$ vertices. The best known general upper bound for \Cref{conj:albertson-berman} is due to Borodin~\cite{B79}.

\begin{theorem}[Borodin, 1979~\cite{B79}]\label{thm:borodin}
    Every planar graph $G$ on $n$ vertices satisfies $\fv(G) \le \frac{3n}{5}$.
\end{theorem}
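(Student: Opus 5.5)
Borodin's theorem is the classical consequence of his acyclic $5$-colouring theorem, and that is the route I would take. The plan is to prove that every planar graph admits a proper vertex colouring with five colours such that any two colour classes induce a forest (an \emph{acyclic colouring}). Given such a colouring with classes $V_1,\dots,V_5$, consider the five ways of choosing two classes. Each vertex lies in exactly four of these ten pairs, so the union sizes satisfy
\[
\sum_{i<j}|V_i\cup V_j| = 4n .
\]
Averaging over the ten pairs gives a pair $V_i\cup V_j$ of size at least $\frac{4n}{10}=\frac{2n}{5}$. This pair induces a forest, so it is an induced acyclic subgraph, and hence $a(G)\ge \frac{2n}{5}$. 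Using $\fv(G)=n-a(G)$ we obtain $\fv(G)\le\frac{3n}{5}$.

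The main work is the acyclic $5$-colouring theorem. I would prove it by minimal counterexample. First, we may assume $G$ is a triangulation, since adding edges only makes acyclic colouring harder. By Euler's formula $G$ has minimum degree at most $5$, and in fact we have Lebesgue/Kotzig-type structural information: there is a vertex of degree at most $2$ or $3$ (handled directly), or else certain configurations such as a degree-$4$ or degree-$5$ vertex with neighbours of bounded degree must occur. For each unavoidable configuration, we delete or contract part of it and acyclically $5$-colour the smaller graph by minimality. We then extend the colouring, recolouring a few vertices if necessary, so that no bichromatic cycle passes through the reinserted vertex. The key check is that a new bichromatic cycle through a vertex $v$ coloured $c$ must use two neighbours of $v$ with the same colour $c'$. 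So it suffices to colour $v$ with a colour $c$ such that, for each repeated colour $c'$ in $N(v)$, the neighbours coloured $c'$ are not joined by a $\{c,c'\}$-path.

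I expect the main obstacle to be the reducibility analysis for degree-$5$ vertices (and degree-$4$ vertices whose neighbourhood uses repeated colours). There, the five neighbours may exhaust all colours, or repeated colours create potential bichromatic paths. My first attempt would be to contract an edge $vu$ to a low-degree neighbour $u$, which forces a repeated colour in a controlled way. I would then argue via Kempe-type swaps along bichromatic components, as in Borodin's original discharging proof, that some colour for $v$ is always safe. If that case analysis becomes unwieldy, I would instead rely on a discharging argument to exhibit a richer unavoidable set of configurations, each easy to reduce.
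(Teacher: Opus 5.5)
The derivation of $\fv(G)\le\frac{3n}{5}$ from an acyclic $5$-colouring is correct, but your proposal does not prove the acyclic $5$-colouring theorem, and that theorem is the entire content of the statement. What you give is a strategy outline, and its one concrete tool does not work. A Kempe swap of colours $a,b$ on a component $T$ of the $\{a,b\}$-forest keeps the colouring proper and the $\{a,b\}$-subgraph a forest, but it controls nothing else. Suppose $t\in T$ goes from $b$ to $a$ and has two $c$-coloured neighbours $p,q$ joined by an $\{a,c\}$-path whose $a$-vertices lie outside $T$. Then the swap creates an $\{a,c\}$-cycle through $t$. Likewise, merging two non-adjacent neighbours $x,y$ of $v$ to force a repeated colour, and then splitting them, can create a bichromatic $4$-cycle $x\,w\,y\,w'$ that was invisible in the smaller graph. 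So after any recolouring you must check all colour pairs, not only cycles through $v$. Even degree $4$ is not routine. If the link $x_1x_2x_3x_4$ of $v$ is coloured $a,b,a,c$, both free colours $d,e$ can be blocked, by an $\{a,d\}$-path and an $\{a,e\}$-path from $x_1$ to $x_3$. This is why Borodin's proof is a long discharging argument with a large list of reducible configurations, most needing a dedicated recolouring argument. You name neither an unavoidable set nor a reducibility argument for any configuration beyond degree $3$.

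The averaging step itself is correct and is the standard way the $\frac{3n}{5}$ bound is obtained. Each vertex lies in $4$ of the ten pairs of classes (not ``five ways'', as you first wrote), so some $V_i\cup V_j$ induces a forest on at least $\frac{2n}{5}$ vertices. The paper gives no proof of this statement at all; it quotes it from \cite{B79} as background. Its own results concern directed cycles in oriented graphs and cannot recover the undirected bound. The fix is to cite the acyclic $5$-colouring theorem from \cite{B79} rather than claim a proof of it. With that citation your short derivation is complete; without it, the proposal establishes only the reduction.
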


Better upper bounds are known depending on the \emph{girth} (the length of a smallest cycle)~\cite{DMP16,SX17,KL17}. The case of triangle-free planar graphs was recently particularly studied. Successive improvements of the upper bound for $\fv(G)$ were obtained showing that $\fv(G) \le \frac{m}{4}$ (Alon et al.~\cite{AMT01}), then $\fv(G) \le \frac{3n+6m}{32}$ (Salvatipour~\cite{S06}), then $\fv(G) \le \frac{6n+7m}{44}$ (Dross, Montassier and Pinlou~\cite{DMP19}) and finally $\fv(G) \le \frac{4n}{9}$ (Le~\cite{L18}) which is the best known upper bound. The conjectured upper bound for triangle-free planar graphs is $\fv(G) \le \frac{3n}{8}$ and was proposed by Akiyama and Watanabe~\cite{AW87}.

We switch to oriented graphs (or orgraphs) which have no parallel arcs, no digons and no loops. The length of a shortest directed cycle of an orgraph $G$ is called the \emph{digirth} of $G$.

The \Cref{thm:borodin} obtained by Borodin~\cite{B79} in 1979 directly imply that $\fv(G) \le \frac{3n}{5}$ for planar oriented graphs $G$ on $n$ vertices. Surprisingly, this is still the best known upper bound for $\fv(G)$ in planar oriented graphs.

Albertson posed the following weakening of \Cref{conj:albertson-berman} (see~\cite{Mohar2002, W06, KVW17}): does every planar orgraph contain an acyclic set of size at least $\frac{n}{2}$? In other words, is it true that $\fv(G) \le \frac{n}{2}$ for any planar oriented graph $G$ on $n$ vertices? If true, the conjecture would be tight since Knauer, Valicov and Wenger~\cite{KVW17} constructed an infinite family of planar orgraphs $G$ on $n$ vertices with $\fv(G) = \frac{n-1}{2}$.
Moreover, the question is a weakening of the following famous conjecture.

\begin{conjecture}[Neumann-Lara, 1985~\cite{NL85}]\label{conj:NL}
    Every planar oriented graph can be vertex-partitioned into two acyclic sets.
\end{conjecture}

While \Cref{conj:NL} remains open in general, it was established by Li and Mohar~\cite{LM17} for planar orgraphs of digirth at least 4. Therefore, $\fv(G) \le \frac{n}{2}$ for planar oriented graphs $G$ of digirth at least 4.
As in the undirected case, better upper bounds for $\fv(G)$ are known when the digirth $g$ is large. For $g \ge 6$, Esperet, Lemoine and Maffray~\cite{ELM17} proved that $\fv(G) \le \frac{2n-6}{g}$. This result was recently improved by the author together with Pinlou and Valicov~\cite{DPV26}, who showed that $\fv(G) \le \frac{n-2}{g-2}$ for planar orgraphs of digirth $g \ge 3$ and that this inequality is strict when $G$ is not a directed cycle. They also showed the following inequality that will be used in this article to prove one of the corollaries.
\begin{theorem}[Dreyer, Pinlou and Valicov, 2026~\cite{DPV26}]\label{thm:DPV26}
    Let $G$ be a strongly connected planar oriented graph of digirth $g \ge 4$ having $n$ vertices and $m$ arcs. Then $\fv(G) \le \frac{n-2}{g-3} - \frac{m}{g(g-3)}$.
\end{theorem}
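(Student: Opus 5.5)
The plan is to pass from vertices to arcs and then use planar duality to turn the problem into a cycle-packing count. First, if $F$ is a feedback arc set of $G$, then choosing one endpoint of each arc of $F$ gives a feedback vertex set. Hence $\fv(G)\le \tau'(G)$, the minimum size of a feedback arc set. Next, fix a plane embedding of $G$ and consider its dual $G^*$. Directed cycles of $G$ correspond exactly to minimal directed cuts of $G^*$. By the Lucchesi--Younger theorem applied to $G^*$, $\tau'(G)$ therefore equals $\nu'(G)$, the maximum number of arc-disjoint directed cycles of $G$. So it suffices to prove
\[
(g-3)\,k+\frac{m}{g}\le n-2
\]
for every family $\mathcal C=\{C_1,\dots,C_k\}$ of arc-disjoint directed cycles of $G$.

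Second, I would normalize the packing. By a standard uncrossing argument in the plane, we may assume the cycles of $\mathcal C$ are pairwise non-crossing, so the closed regions they bound form a laminar family. The step is: whenever two cycles cross, replace them by the two cycles bounding the union and intersection of their interiors. This keeps arc-disjointness and the value $k$, and some potential strictly decreases. Each $C_i$ has length $\ell_i\ge g$, and I write $L=\sum_i \ell_i\ge gk$.

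Third, I would do the counting through Euler's formula with discharging, using strong connectivity. Since $G$ is strongly connected, $G^*$ is acyclic, and every arc lies on a directed cycle. Let $f=m-n+2$ be the number of faces. The target inequality rewrites as $(g-2)k+\frac{m-gk}{g}\le n-2=m-f$, that is,
\[
f+(g-2)k\le m\Bigl(1-\frac1g\Bigr)+k .
\]
I would give each arc charge $1-\frac1g$ and each face charge $-1$, and show that the total charge is at least $(g-3)k$. The laminar structure lets me treat the regions between consecutive nested cycles separately. The inner face structure of such a region is a plane graph whose boundary consists of directed cycles of length $\ge g$. Every face has length at least $3$, and directed faces have length at least $g$. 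Arcs not on any $C_i$ lie on directed cycles (by strong connectivity) of length $\ge g$, and this should supply the extra slack of $\frac1g$ per arc. A cleaner alternative is induction on $k$: cut $G$ along an innermost cycle $C_1$, apply the inequality to both sides (each side stays strongly connected, since a directed cycle is strongly connected), and check that gluing along a cycle of length $\ell_1\ge g$ contributes exactly the needed $g-3$ on the left. The base case is that an acyclic-free packing with $k=0$ gives $m/g\le n-2$, which follows from Euler and $f\le$ number of faces of length $\ge 3$ when $g\ge 4$.

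The main obstacle is this third step: making the Euler/discharging count yield precisely the coefficient $g-3$ and the term $-m/(g(g-3))$, rather than a weaker constant. I expect the inductive cut-along-an-innermost-cycle approach to be the most tractable. The delicate points will be handling vertices shared by several cycles of the packing, and verifying the base inequality for graphs with no two arc-disjoint cycles.
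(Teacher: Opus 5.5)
The paper does not prove this statement; it is quoted from \cite{DPV26}, so I judge your route on its own. Your first two steps are fine: $\fv(G)\le\tau'(G)$, and Lucchesi--Younger in the dual gives $\tau'(G)=\nu'(G)$. The reduction itself is fatal, though. The inequality you then need, $(g-3)k+\frac{m}{g}\le n-2$ for every family of $k$ arc-disjoint directed cycles, is false, so no discharging or induction can establish it.

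For $t\ge 4$, take the antiprism on $a_0,\dots,a_{t-1},b_0,\dots,b_{t-1}$ (indices mod $t$) with arcs $a_i\to a_{i+1}$, $b_i\to b_{i+1}$, $b_i\to a_i$, $a_i\to b_{i+1}$.
\begin{itemize}
\item The facial triangles $a_ib_ib_{i+1}$ and $a_ia_{i+1}b_{i+1}$ are all transitive.
\item Set $\phi(b_i)=i$ and $\phi(a_i)=i+\frac12$. Every arc raises $\phi$ by $\frac12$ or $1$ (modulo $t$), so along any directed cycle $\phi$ increases by a positive multiple of $t$ in steps of at most $1$. Hence every directed cycle has length at least $t$, and the digirth is $g=t$.
\item The top cycle, the bottom cycle and the zigzag $b_0a_0b_1a_1\cdots b_{t-1}a_{t-1}b_0$ are three arc-disjoint directed cycles covering all $m=4t$ arcs. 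So the graph is a strongly connected planar oriented graph with $n=2t$.
\end{itemize}
For $k=3$ your inequality reads $3(t-3)+4\le 2t-2$, i.e.\ $t\le 3$, which fails. Equivalently, $\tau'=\nu'=3$, while the theorem's bound is $\frac{2t-2}{t-3}-\frac{4t}{t(t-3)}=2$.

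The theorem itself is tight on this example. Take $\phi$ with values in $[0,t)$. The only arcs along which $\phi$ decreases are $a_{t-1}a_0$, $a_{t-1}b_0$ and $b_{t-1}b_0$, and every directed cycle uses one of them, so $\{a_0,b_0\}$ is a feedback vertex set. The top and bottom cycles are vertex-disjoint, so $\fv(G)=2$.

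This is exactly what your approach discards: two vertices meet three arc-disjoint cycles, so the loss in $\fv(G)\le\tau'(G)$ is not a lower-order term. A correct proof has to argue at the level of vertices, for instance by accounting for how many cycles a single vertex can hit. It cannot go through a bound on the arc-packing number.

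Separately, your gluing step does not balance. Add the inequalities for the two sides of a cycle $C_1$ of length $\ell_1$, counting $C_1$ on both sides. This leaves an excess of $\frac{(g-1)(\ell_1-g)}{g}$, which vanishes only when $\ell_1=g$.
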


Relaxing the planarity assumption, other parameters have been considered in the literature. Knauer, La and Valicov~\cite{KLV22} studied oriented graphs of bounded degeneracy or treewidth and proved that $\fv(G) \le \frac{k-1}{k+1}n$ for orgraphs $G$ of degeneracy $k$ and that this inequality is strict when $k$ is odd. They also showed that $\fv(G) \le \frac{k}{k+3}n$ for orgraphs $G$ of treewidth $k$. Ai, Gutin, Liu, Yeo and Zhou~\cite{AGLYZ25} studied the size of a feedback vertex set in oriented graphs having a bounded maximum degree.

\begin{theorem}[Ai, Gutin, Liu, Yeo and Zhou, 2025~\cite{AGLYZ25}]\label{thm:AGLYZ}
    Every oriented graph $G$ with maximum degree at most 4 satisfies $\fv(G) \le \frac{3n}{7}$ and every oriented graph $G$ with maximum degree at most 5 satisfies $\fv(G) \le \frac{n}{2}$.
\end{theorem}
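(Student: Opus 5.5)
The plan is a minimal counterexample argument with local reductions, done separately for the two degree bounds. Since $\fv$ is additive over connected components, I may assume $G$ is connected. Let $G$ be a counterexample with the fewest vertices. The basic reduction is the following. Suppose we find a set $X$ of vertices and a subset $Y\subseteq X$ such that:
\begin{itemize}
\item $G[X\setminus Y]$ is acyclic;
\item every vertex of $X\setminus Y$ is a source or a sink in $G[(V\setminus X)\cup (X\setminus Y)]$, or more generally $X\setminus Y$ can be added to any acyclic set of $G-X$ without creating a directed cycle.
\end{itemize}
Then $\fv(G)\le \fv(G-X)+|Y|$. Minimality therefore forbids any such configuration with $|Y|\le \frac{3}{7}|X|$ (degree $4$), respectively $|Y|\le\frac12|X|$ (degree $5$). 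The key local fact is that every vertex $v$ satisfies $\min(d^+(v),d^-(v))\le 2$, since $d(v)\le 5$. So deleting the at most two in-neighbours (or out-neighbours) of $v$ lets $v$ be added back as a source (or sink) of any acyclic set.

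\textbf{Step 1: local structure of $G$.} First I show that $G$ is strongly connected and that every vertex has both in- and out-degree at least $1$. A source or sink can be removed for free, i.e.\ $X=\{v\}$ and $Y=\emptyset$. Next I show that a vertex with in-degree or out-degree exactly $1$ has strong restrictions on its neighbourhood. For such $v$, take $X=\{v,u\}$ with $u$ its unique in-neighbour and $Y=\{u\}$; this gives ratio $1/2$, which already kills such vertices in the degree-$5$ case. In the degree-$4$ case I need to grow $X$: I look for two or three such low-degree vertices sharing the deleted neighbour, to reach ratio at most $3/7$, for instance $|Y|=3$ and $|X|=7$.

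After this step every vertex has $d^+,d^-\ge 2$. So for $\Delta\le 4$ the graph is $2$-in $2$-out regular. For $\Delta\le 5$, every vertex has degree $4$ or $5$ with $\min(d^+,d^-)=2$.

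\textbf{Step 2: pairing reductions in the near-regular case.} Now a single vertex gives ratio $2/3$, which is too weak. I look for pairs or triples of vertices $v_1,\dots,v_k$ whose "small side" neighbourhoods overlap heavily. Suppose $v_1,\dots,v_k$ form an acyclic set, each $v_i$ is a source or sink relative to $V\setminus X$, and the union $Y$ of the deleted small-side neighbours has $|Y|\le \frac12 |X|$ (resp.\ $\frac37|X|$). A natural first try is a directed path $v_1\to v_2$ with $N^-(v_2)\ni v_1$. Deleting $N^-(v_1)$ and $N^-(v_2)\setminus\{v_1\}$ costs $3$ for $|X|=5$; this has ratio $3/5$, still too large. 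So I will use longer structures. Take a maximal directed path or an in-tree in which each vertex's other in-neighbour is shared. Then argue by counting that either such a cheap structure exists, or $G$ is one of finitely many small exceptional orgraphs, such as the tight examples on $7$ vertices for $\Delta=4$. These exceptional orgraphs are checked directly.

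\textbf{Main obstacle.} The hard part is Step 2 in the regular case, where no single-vertex reduction achieves the target ratio. I expect a fallback to a global argument to be needed there rather than purely local reductions. The first thing I would try is a probabilistic argument: pick a uniformly random linear order of $V$, and keep the vertices whose in-neighbours on the "small side" all precede them (equivalently, vertices that are local sources with respect to the order). For a vertex with two in-neighbours this keeps $v$ with probability $1/3$. A greedy correction, re-inserting vertices that remain sources or sinks after the deletions, then has to lift this to $4/7$ (resp.\ $1/2$) of the vertices kept. Combining this averaging with the structural information from Step 1 is where I expect the real work, and possibly a case analysis on the $2$-in $2$-out structure, to lie.
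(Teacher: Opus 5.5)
\textbf{The proposal has genuine gaps; the decisive steps are announced rather than carried out.} The paper does not prove this statement. It imports it from \cite{AGLYZ25} and uses only the $\Delta\le 4$ half, on $4$-regular graphs, at the very end of the proof of Theorem~\ref{thm:main}. So your proposal has to stand on its own, and it does not.

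Already in the $\Delta\le 4$ case, Step~1 is incomplete. Deleting a vertex $v$ with $d^-(v)=1$ together with its in-neighbour $u$ has ratio $1/2>3/7$. Nothing in your argument forces $u$ to have further out-neighbours of in-degree $1$; its other out-neighbours may all have in-degree $2$. So you cannot conclude that a minimal counterexample is $2$-in $2$-out.

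Step~2, which is the real content of the theorem in both cases, is missing:
the only configuration you actually evaluate has ratio $3/5$, which is too large;
the ``maximal path or in-tree \dots\ argue by counting'' step is not an argument;
a random order keeps only about $n/3$ vertices of a $2$-in $2$-out graph, so the ``greedy correction'' up to $4n/7$ (resp.\ $n/2$) is precisely what would have to be proved.

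You get stuck because your induction hypothesis counts only vertices. A reduction can then pay off only through the ratio $|Y|/|X|$, and in (near-)regular graphs the natural local configurations are too expensive. For Theorem~\ref{thm:main}, this paper instead strengthens the statement with an arc term plus a correction $h$ for extremal components. Deleting a set that destroys many arcs then becomes profitable; see for instance the triples $(4,10,2)$ and $(5,17,2)$ of Table~\ref{tab:triples}.

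That framework gives your $\Delta\le 5$ half directly. If $\Delta\le 5$ then $m\le 5n/2$, and $h=0$ because every graph of $\mathcal{H}$ has vertices of degree $6$, so Theorem~\ref{thm:main} yields $\fv(G)\le n/2$. However, the proof of Theorem~\ref{thm:main} ends by applying the $\Delta\le 4$ bound to a $4$-regular graph, in which every vertex has in- and out-degree $2$. So the $2$-in $2$-out case cannot be bypassed, and your proposal offers no way to handle it.
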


They also posed the following conjecture for oriented graphs of maximum degree 6.

\begin{conjecture}[Ai, Gutin, Liu, Yeo and Zhou, 2025~\cite{AGLYZ25}]\label{conj:AGLYZ}
    Every oriented graph $G$ with maximum degree at most 6 satisfies $\fv(G) \le \frac{4n}{7}$.
\end{conjecture}

They noted that, if true, this inequality would be tight since the Paley graph $P^7$ of order 7 has maximum degree 6 and $\fv(P^7) = \frac{4n(P^7)}{7}$.

Some of the recent cited results were obtained by using the discharging method, minimal counterexamples and isolating certain specific class of subgraphs~\cite{AGLYZ25,L18}. Using these techniques, we provide an upper bound for $\fv(G)$ for oriented graphs which both answers the \Cref{conj:AGLYZ} and improves the oriented version of \Cref{thm:borodin}.

\section{Contributions}

Let $P^7$ be the Paley graph of order 7 (see \Cref{subfig:P7}). It is a 6-regular oriented graph with 7 vertices and 21 arcs with $\fv(P^7) = 4$. It also has the property that for any three vertices $x,y,z \in V(P^7)$ it is possible to extend $\{x,y,z \}$ to a minimal feedback vertex set of $P^7$. 

We then define $\mathcal{H}$ as the family of graphs containing $P^7$ as well as all graphs that can be obtained by taking two graphs from $\mathcal{H}$ and connecting them by an arc (see \Cref{subfig:graphInH} for an example). Thus every graph $H \in \mathcal{H}$ is a tree-like branching of copies of $P^7$ linked by arcs. If $H \in \mathcal{H}$ is a branching of $k$ copies of $P^7$ then $H$ has $7k$ vertices and $21k + (k-1) = 22k - 1$ arcs. Moreover, $\fv(H) = 4k$ and $a(H) = 3k$.

Let us also note that the minimal feedback vertex sets of $H$ are exactly the unions of minimal feedback vertex sets of each copy of $P^7$ that compose $H$. In particular we have the following fact.

\begin{fact}\label{fact:fvsGraphInH}
    For every graph $H \in \mathcal{H}$ and for any three vertices $x,y,z \in V(H)$ it is possible to extend $\{x,y,z \}$ to a minimal feedback vertex set of $H$.
\end{fact}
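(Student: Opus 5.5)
The plan is to reduce the fact to the corresponding property of a single copy of $P^7$, stated just before the definition of $\mathcal{H}$, using the structural claim that the minimal feedback vertex sets of $H$ are exactly the unions of minimal feedback vertex sets of its copies of $P^7$.

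First I will justify that structural claim by induction on the number $k$ of copies. If $H$ is obtained from $H_1,H_2\in\mathcal{H}$ by adding one arc $uv$ with $u\in V(H_1)$ and $v\in V(H_2)$, then $uv$ is a bridge of the underlying graph. So no directed cycle of $H$ uses it: a directed cycle through $uv$ would have to return from $H_2$ to $H_1$ by another arc, and there is none. Hence the directed cycles of $H$ are exactly those of $H_1$ together with those of $H_2$. A set $S\subseteq V(H)$ is therefore a feedback vertex set of $H$ if and only if $S\cap V(H_1)$ and $S\cap V(H_2)$ are feedback vertex sets of $H_1$ and $H_2$. Moreover, $S$ is inclusion-minimal if and only if both parts are. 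Iterating this over the tree-like decomposition gives the claim for all copies of $P^7$.

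Now take $x,y,z\in V(H)$ and let $C_1,\dots,C_k$ be the copies of $P^7$ forming $H$. Each copy $C_i$ contains at most three of the vertices $x,y,z$. By the stated property of $P^7$, any set of at most three vertices of $C_i$ extends to a minimal feedback vertex set $S_i$ of $C_i$. For fewer than three vertices, first pad the set with arbitrary extra vertices of $C_i$ to get three, then apply the property. The union $\bigcup_i S_i$ contains $\{x,y,z\}$ and is a minimal feedback vertex set of $H$ by the structural claim.

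The main point to verify is the property of $P^7$ itself, which the paper asserts. If needed, I would check it using the vertex-transitivity of the Paley tournament and its automorphisms $t\mapsto at+b$, where $a$ ranges over the squares modulo $7$. These act transitively on arcs, and hence also on ordered pairs of distinct vertices up to direction. This reduces the check to a few cases of triples, namely those forming a transitive triangle or a directed triangle. In each case I would exhibit a set of $4$ vertices containing the triple whose complement is a transitive triangle. Such a set is automatically minimal, since $\fv(P^7)=4$ forces every feedback vertex set to have at least $4$ vertices.
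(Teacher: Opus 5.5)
Your proposal is correct and follows the paper's route. The paper likewise derives the fact from the extension property of $P^7$ together with the observation that the minimal feedback vertex sets of $H$ are exactly the unions of minimal feedback vertex sets of its copies of $P^7$; your bridge argument supplies the justification for that observation, which the paper leaves implicit. For the $P^7$ property, the symmetry case analysis is unnecessary. (Under the maps $t\mapsto at+b$ the directed triangles in fact fall into two orbits, not one.) The four vertices outside any triple span a tournament with a vertex of out-degree at least $2$, hence contain a transitive triangle, and its complement is a $4$-element feedback vertex set containing the triple.
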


 \begin{figure}[ht]
    \centering
    \begin{subfigure}[ht]{0.48\textwidth}
        \centering
        \includegraphics[scale = 0.6]{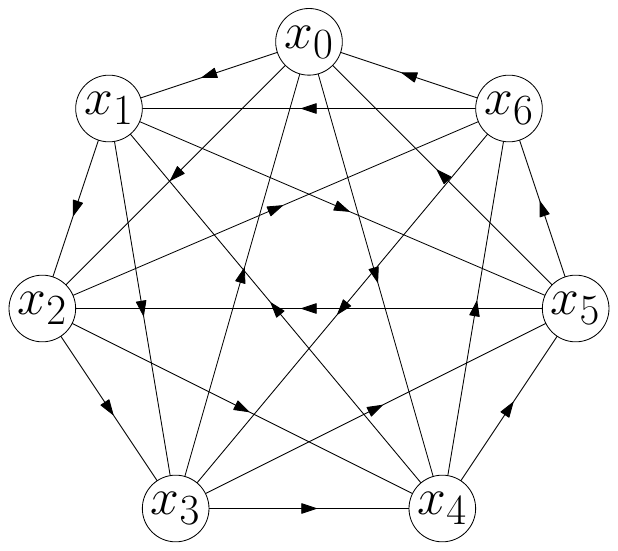}
        \caption{$P^7$: The Paley graph of order 7.}\label{subfig:P7}
    \end{subfigure}
    \begin{subfigure}[ht]{0.48\textwidth}
        \centering
        \includegraphics[scale = 0.45]{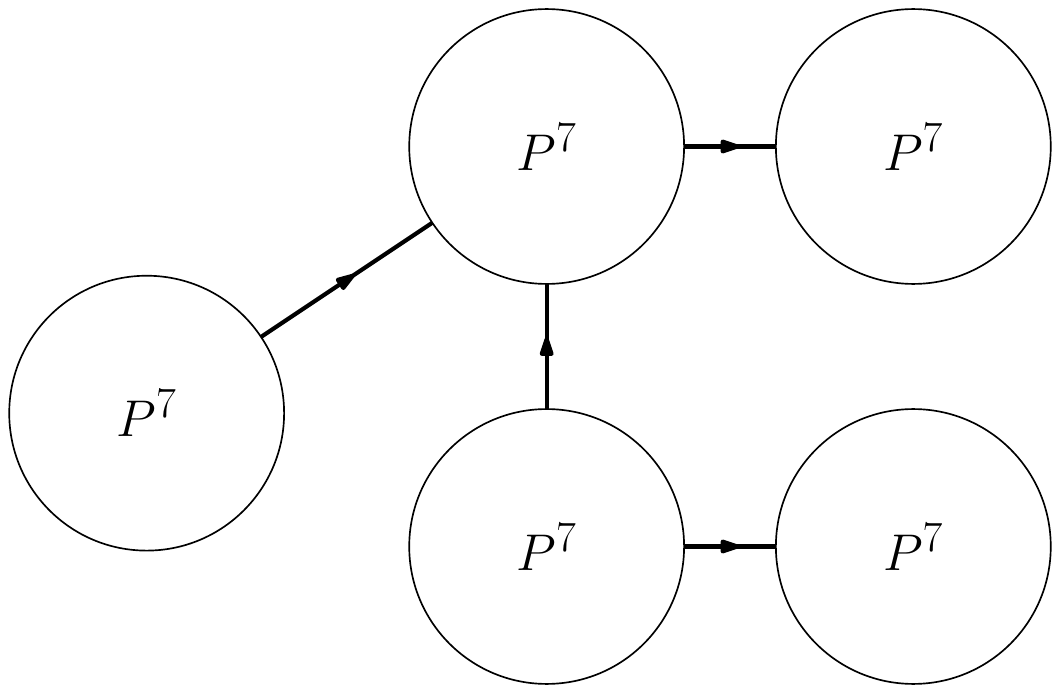}
        \caption{A graph in $\mathcal{H}$.}\label{subfig:graphInH}
    \end{subfigure}
    \caption{Illustration of the graph $P^7$ (left) and a graph in $\mathcal{H}$ (right).}
\end{figure}

For a planar oriented graph $G$ we denote by $n(G)$ the number of vertices, $m(G)$ the number of arcs and $h(G)$ the number of connected components of $G$ belonging to $\mathcal{H}$. When there is no ambiguity we simply write $n$, $m$ and $h$ instead of $n(G)$, $m(G)$ and $h(G)$.

\begin{theorem}\label{thm:main}
    Every oriented graph $G$ satisfies $a(G) \ge \frac{7n - m - h}{9}$.
\end{theorem}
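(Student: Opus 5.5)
We argue by minimal counterexample. Let $G$ be a counterexample minimizing $n+m$. Each quantity $n$, $m$, $h$ and $a$ is additive over connected components, so $G$ is connected. A graph in $\mathcal{H}$ made of $k$ copies of $P^7$ has $a=3k$, and
\[
\frac{7\cdot 7k-(22k-1)-1}{9}=3k,
\]
so the bound holds with equality on $\mathcal{H}$. Hence $G\notin\mathcal{H}$ and $h(G)=0$, and we must show $a(G)\ge\frac{7n-m}{9}$. Throughout we write $\phi(G)=7n-m-h$ and use this potential in reductions. Typically we delete a set $X$ of vertices, possibly adding or removing a few arcs, to get $G'$. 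We take an acyclic set $A'$ of $G'$ given by minimality, with $9|A'|\ge \phi(G')$, and extend it by a set $Y\subseteq X$ such that $A'\cup Y$ is acyclic in $G$. The step succeeds if $9|Y|\ge \phi(G)-\phi(G')$, and the difficulty is that $h(G')$ may be larger than $h(G)$.

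\textbf{Step 1: low-degree vertices.} A vertex $v$ with $d(v)\le 2$ is, after adding it, never on a directed cycle if it is a source or sink. Deleting $v$ costs $7-d(v)\le 9$ in $\phi$ and gains one acyclic vertex, unless the deletion creates $\mathcal{H}$-components. Each new component $H\in\mathcal{H}$ costs an extra $1$. To handle this we use \Cref{fact:fvsGraphInH}: the neighbours of $v$ inside $H$ (at most three in the degree-$\le 3$ cases) can be placed in the feedback vertex set of $H$. We then add $v$ freely, and the accounting becomes exact optimality on $H$ plus the slack from $v$. Doing this carefully should show that $G$ has minimum degree at least $3$, and that more generally every vertex $v$ satisfies $d^+(v)\ge 2$ and $d^-(v)\ge 2$. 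Otherwise, we delete $v$ together with its unique in-neighbour (or out-neighbour); then $v$ is a source in what remains and can be added back. The accounting here is $\phi$ decreasing by $14-(\text{arcs removed})$ while gaining one vertex, which works once the degrees are bounded suitably. A vertex of large degree makes $\phi$ drop a lot when it is deleted, so we may assume $\Delta$ is small, of order around $7$. For high degree, deleting $v$ alone gives a gain of $0$ and a $\phi$-drop of $7-d(v)+(\text{new } h)$, which is at most $0$ when $d(v)\ge 7$ and no new $\mathcal{H}$ components appear. So we may assume $\Delta\le 6$, up to $\mathcal{H}$ effects.

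\textbf{Step 2: the structure near $P^7$.} Since every vertex has degree at most $6$ and in- and out-degree at least $2$, the bound $\frac{7n-m}{9}\le \frac{7n-2n}{9}$ is close to what a discharging argument or a greedy construction gives. We aim to show that the configuration forcing tightness is exactly $P^7$ attached by single arcs, which contradicts $G\notin\mathcal{H}$. The plan is to exhibit reducible configurations, for example a vertex $v$ with $d(v)\le 5$, or a pair of adjacent degree-$6$ vertices with few common neighbours. For each, we delete a small set $X$ and reinsert two vertices of $X$ that cannot close a cycle, such as a source–sink pair, or vertices whose out-neighbourhood lies in the deleted part. We then check $18\ge 7|X|-(\text{arcs incident to }X)+\Delta h$. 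In the final, $6$-regular case $m=3n$ and the target is $\frac{4n}{9}$. We would prove this by removing a vertex $v$ and its in-neighbourhood, and arguing through tournament structure on $N(v)$ that the local graph must be $P^7$. Otherwise some denser or sparser local pattern is reducible.

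\textbf{Main obstacle.} The hardest part is controlling $h(G')$. A reduction may split off components isomorphic to members of $\mathcal{H}$, each worsening the budget by $1$. The remedy is the extension property of \Cref{fact:fvsGraphInH}. For each such component $H$, all vertices adjacent to $X$ (at most three per component in our reductions, which we must ensure) can be put in the feedback vertex set while keeping $a(H)=\frac{3}{7}|V(H)|$. Then $H$ acts independently of the reinserted vertices, and its contribution exactly offsets the lost unit, thanks to the arc joining it to $X$ and its tight count. Verifying that every reduction meets $H$-components in at most three attachment vertices, and checking the resulting case analysis in the $\Delta=6$ regime, is where I expect most of the work.
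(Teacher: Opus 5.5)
Your outline follows the paper's minimal-counterexample strategy in its first half: in- and out-degree at least $2$, bounded maximum degree, and local structure around degree-$6$ vertices forcing a copy of $P^7$. However, it misidentifies the endgame and gives no argument for the case that actually remains. In Step 2 you plan to make every vertex of degree at most $5$ reducible and finish in the $6$-regular case. Degree-$5$ vertices can indeed be eliminated, by a fairly long case analysis. Degree-$6$ vertices can also be eliminated locally: they must be $3$-in/$3$-out, cannot be adjacent to a degree-$4$ vertex, and their in- and out-neighbourhoods must be directed triangles, which forces an induced $P^7$. Degree-$4$ vertices, however, are not reducible in your sense. Deleting such an $x$ alone lowers $7n-m$ by $3$ for no gain. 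If $x$ and its two out-neighbours $u,v$ all have degree $4$, deleting $\{x,u,v\}$ lowers $7n-m$ by at least $21-10=11>9$, while in general only $x$ can be put back. What survives all local reductions is therefore a $2$-in/$2$-out regular oriented graph, with $m=2n$, where you must show $a(G)\ge\frac{5n}{9}$. That is a global statement, and ``close to what a discharging argument or a greedy construction gives'' does not prove it. The paper closes exactly this case with an external result, the theorem of Ai, Gutin, Liu, Yeo and Zhou (\Cref{thm:AGLYZ}). Maximum degree at most $4$ gives a feedback vertex set of size at most $\frac{3n}{7}$, hence $a(G)\ge\frac{4n}{7}>\frac{5n}{9}$. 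Without that theorem, or your own proof of the $4$-regular case, your argument does not terminate. Also, because degree-$4$ vertices cannot be removed first, the degree-$6$ analysis has to be done with degree-$4$ vertices present. This is why the paper needs the lemma forbidding arcs between degree-$6$ and degree-$4$ vertices, which your plan omits.

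You leave the control of $h(G')$ as an open ``main obstacle'' to be checked reduction by reduction. The paper settles it once, before any degree reduction. Suppose a maximal $\mathcal{H}$-subgraph $H$ with $k$ copies of $P^7$ had at most three arcs entering it. By \Cref{fact:fvsGraphInH} one can take an acyclic set of size $3k$ in $H$ avoiding the heads of those arcs and delete $H$. Maximality gives $h(G-H)\le h+\deg(H)-1$, which makes the potential balance. So every maximal $\mathcal{H}$-subgraph has at least four arcs in and four arcs out. From this the paper shows that vertices of degree at least $7$ are reducible. It then follows that no $G-X$ has an $\mathcal{H}$-component: its vertices already have degree $6$ inside it, so a vertex attaching it to $X$ would have degree at least $7$ in $G$. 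After that, every reduction uses the clean potential $7n-m$, and no bookkeeping of attachment vertices is needed.
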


Applied to planar oriented graphs, \Cref{thm:main} gives a better upper bound for $\fv(G)$ than the one due to Borodin~\cite{B79}.

\begin{corollary}\label{cor:planar}
    Every planar oriented graph $G$ satisfies $\fv(G) \le \frac{2n + m}{9}$ and thus $\fv(G) \le \frac{5n-6}{9}$.
\end{corollary}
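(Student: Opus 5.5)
The corollary follows from \Cref{thm:main} once we check that planar graphs have $h=0$, and then from Euler's formula.

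First, $h(G)=0$ for every planar oriented graph $G$. Every graph in $\mathcal{H}$ contains a copy of $P^7$. The underlying graph of $P^7$ is $K_7$, which is not planar; indeed, $K_7$ has $21 > 3\cdot 7-6=15$ edges. Hence no component of a planar $G$ lies in $\mathcal{H}$, and \Cref{thm:main} gives
\[
\fv(G)=n-a(G)\le n-\frac{7n-m}{9}=\frac{2n+m}{9}.
\]

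Second, we bound $m$ using planarity. The underlying graph of an oriented graph is simple, since there are no digons or parallel arcs. For $n\ge 3$, Euler's formula gives $m\le 3n-6$, so
\[
\fv(G)\le\frac{2n+3n-6}{9}=\frac{5n-6}{9}.
\]

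The small cases need separate care. For $n\le 2$ the graph is acyclic, so $\fv(G)=0$. Then $\fv(G)\le\frac{2n+m}{9}$ holds trivially. However, $\frac{5n-6}{9}$ is negative for $n=1$, so that bound fails when $n=1$. I would state the second inequality for $n\ge 3$, or note that the paper implicitly assumes $n\ge 3$. For $n=2$ we get $\frac{5\cdot 2-6}{9}=\frac{4}{9}\ge 0$, which is fine.

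There is no real obstacle here. The only points needing care are that $K_7$ is non-planar, so $h=0$, and the degenerate case of very small $n$.
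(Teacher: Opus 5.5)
Your proof is correct and follows the paper's argument: $h(G)=0$ because $P^7$, whose underlying graph is $K_7$, is non-planar. Then \Cref{thm:main} gives $\fv(G)\le\frac{2n+m}{9}$, and $m\le 3n-6$ gives the bound $\frac{5n-6}{9}$. Your caveat about small orders is also right: the paper's use of $m\le 3n-6$ tacitly requires $n\ge 3$, for $n=1$ the second inequality is genuinely false since $\fv(G)=0>-\frac{1}{9}$, and for $n=2$ it holds only because $\fv(G)=0$ directly.
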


\begin{proof}
    Let $G$ be a planar oriented graph on $n$ vertices and $m$ arcs. Since $P^7$ is not planar, we have $h(G) = 0$. Therefore, by \Cref{thm:main} we have:
    \begin{displaymath}
        a(G) \ge \frac{7n - m}{9} \quad \text{and then} \quad \fv(G) \le n - \frac{7n - m}{9} = \frac{2n + m}{9} \le \frac{5n-6}{9},
    \end{displaymath}
    where the last inequality follows from the fact that a planar oriented graph on $n$ vertices has at most $3n-6$ arcs.
\end{proof}

Restricted to planar oriented graphs having no oriented triangles, \Cref{thm:main} improves the upper bound of $\fv(G)$ given by Li and Mohar~\cite{LM17}.

\begin{corollary}\label{cor:planar_digirth4}
    Every planar oriented graph $G$ of digirth at least 4 satisfies $\fv(G) \le \frac{6n -8}{13}$.
\end{corollary}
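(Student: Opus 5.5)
The plan is to combine \Cref{cor:planar} with \Cref{thm:DPV26} for $g=4$, and to eliminate the number of arcs $m$ by taking a suitable convex combination. \Cref{thm:DPV26} needs strong connectivity, so I will first reduce to strongly connected pieces.

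\emph{Reduction to strong components.} Every directed cycle of $G$ lies inside a single strongly connected component. Hence the union of acyclic sets chosen in each strong component is acyclic in $G$, and so $\fv(G)\le\sum_i \fv(C_i)$, where $C_1,\dots,C_r$ are the strong components. Each $C_i$ is a planar oriented graph of digirth at least $4$, since induced subgraphs can only increase digirth. The trivial components (single vertices) contribute $0$. A nontrivial strong component contains a directed cycle, so it has $n_i\ge 4$ vertices.

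\emph{The key computation for a nontrivial strong component $C$ with $n$ vertices and $m$ arcs.} \Cref{cor:planar} gives $\fv(C)\le \frac{2n+m}{9}$. \Cref{thm:DPV26} with $g=4$ gives $\fv(C)\le n-2-\frac{m}{4}$. I take the combination with weight $\lambda$ on the first bound and $1-\lambda$ on the second, choosing $\lambda$ so that the $m$-terms cancel: $\frac{\lambda}{9}=\frac{1-\lambda}{4}$, i.e.\ $\lambda=\frac{9}{13}$. This yields
\[
\fv(C)\le \frac{9}{13}\cdot\frac{2n}{9}+\frac{4}{13}(n-2)=\frac{6n-8}{13}.
\]

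\emph{Summing over components.} Let $k$ be the number of nontrivial strong components. Then
\[
\fv(G)\le \sum_{i\,:\,n_i\ge 4}\frac{6n_i-8}{13}\le \frac{6n-8k}{13}.
\]
If $k\ge 1$ this is at most $\frac{6n-8}{13}$. If $k=0$, then $G$ is acyclic and $\fv(G)=0\le\frac{6n-8}{13}$ as soon as $n\ge 2$. The only delicate point is this bookkeeping: the constant $-8/13$ must not be paid once per trivial component. This is why I sum the bound only over nontrivial strong components and use $k\ge 1$, rather than applying the bound to every component. The degenerate case $n=1$ (where the right-hand side is negative) is implicitly excluded by the statement. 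I expect no real obstacle beyond checking that the hypotheses of \Cref{thm:DPV26} hold for each nontrivial strong component, which follows from the inheritance of planarity and digirth noted above.
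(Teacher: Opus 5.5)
Your proof is correct and follows the paper's argument. Cancelling $m$ with the weights $\frac{9}{13}$ and $\frac{4}{13}$ is exactly the paper's step of substituting $m \ge 9\fv(G)-2n$ (from \Cref{cor:planar}) into the $g=4$ case of \Cref{thm:DPV26}, followed by summing over strong components. Your bookkeeping is slightly more careful than the paper's, which applies $\frac{6n_i-8}{13}$ to every strongly connected component, including single vertices where that bound is negative; restricting the sum to nontrivial components and handling the acyclic case separately closes that gap.
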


\begin{proof}
    Suppose first that $G$ is strongly connected. Let $k = \fv(G)$. From \Cref{cor:planar} we have $m \ge 9k - 2n$. From \Cref{thm:DPV26} applied with $g = 4$ we have:
    \begin{displaymath}
        k \le n-2 - \frac{m}{4} \le n-2 - \frac{9k - 2n}{4} \quad \text{which gives} \quad k \le \frac{6n - 8}{13}.
    \end{displaymath}
    Now suppose that $G$ is not strongly connected. Let $G_1, \ldots, G_p$ be the strongly connected components of $G$. We have $\fv(G) = \sum_{i=1}^{p} \fv(G_i)$ and $n(G) = \sum_{i=1}^{p} n(G_i)$. Therefore, we can apply the previous inequality to each strongly connected component $G_i$ and we obtain:
    \begin{displaymath}
        \fv(G) = \sum_{i=1}^{p} \fv(G_i) \le \sum_{i=1}^{p} \frac{6n(G_i) - 8}{13} = \frac{6n(G) - 8p}{13} \le \frac{6n(G) - 8}{13}.
    \end{displaymath}
\end{proof}

Moreover, the formula given by \Cref{thm:main} answers the Conjecture 4.2 of~\cite{AGLYZ25} which states that $\fv(G) \le \frac{4n}{7}$ for graphs of maximum degree 6.

\begin{corollary}\label{cor:degreeMax6}
    Every oriented graph $G$ with maximum degree at most 6 satisfies $\fv(G) \le \frac{4n}{7}$.
\end{corollary}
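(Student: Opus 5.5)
We deduce \Cref{cor:degreeMax6} from \Cref{thm:main}, working one connected component at a time. Both $\fv$ and $n$ are additive over connected components, and the degree bound is inherited by each component. So it suffices to prove $\fv(C) \le \frac{4n(C)}{7}$ for every connected oriented graph $C$ of maximum degree at most $6$. The only place where the term $h$ matters is when $C \in \mathcal{H}$, so we split according to this.

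First suppose $C \in \mathcal{H}$. Every member of $\mathcal{H}$ other than $P^7$ is obtained by joining copies of $P^7$ with at least one arc. The endpoint of such an arc lies in a copy of $P^7$, where it already has degree $6$, so it ends with degree $7$. Hence the only graph of $\mathcal{H}$ with maximum degree at most $6$ is $P^7$ itself. In that case $\fv(P^7) = 4 = \frac{4\cdot 7}{7}$, as recalled in the paper, so the bound holds with equality. This equality case also shows that the corollary is tight.

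Now suppose $C \notin \mathcal{H}$, so $h(C)=0$. Since every degree is at most $6$, we have $m(C) \le \frac{6n(C)}{2} = 3n(C)$. Then \Cref{thm:main} gives
\begin{displaymath}
a(C) \ge \frac{7n - m}{9} \ge \frac{7n - 3n}{9} = \frac{4n}{9},
\end{displaymath}
and therefore $\fv(C) \le \frac{5n}{9}$. Since $\frac{5}{9} < \frac{4}{7}$, this is stronger than what we need.

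Summing the two cases over all components gives $\fv(G) \le \frac{4n}{7}$. There is no real obstacle here. The one point needing care is the structural observation that no graph of $\mathcal{H}$ other than $P^7$ has maximum degree at most $6$, because of the connecting arcs. This is what ensures that the $h$ term in \Cref{thm:main} only ever costs us at the single extremal graph $P^7$.
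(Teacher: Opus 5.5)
Your proof is correct, but it handles the $h$ term differently from the paper. The paper does not split into components and never identifies which members of $\mathcal{H}$ have maximum degree at most $6$. It only uses that every graph in $\mathcal{H}$ has at least $7$ vertices, so $h \le \frac{n}{7}$. Substituting this and $m \le 3n$ into $\fv(G) \le \frac{2n+m+h}{9}$ gives $\frac{2n+3n+n/7}{9} = \frac{4n}{7}$ in one line.

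Your route costs two extra ingredients. First, the structural observation that the connecting arcs force a vertex of degree $7$, so $P^7$ is the only member of $\mathcal{H}$ of maximum degree at most $6$. Second, the known value $\fv(P^7)=4$ on those components, in place of \Cref{thm:main}. In return it shows where the slack lies: every component other than $P^7$ satisfies the stronger bound $\fv(C) \le \frac{5n(C)}{9}$. Hence equality forces $G$ to be a disjoint union of copies of $P^7$.

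The two routes carry the same information, though. Summed over components, your estimate is $\frac{5(n-7h)}{9} + 4h = \frac{5n+h}{9}$, which is exactly the intermediate bound the paper's computation passes through. The paper's equality case $h = \frac{n}{7}$ likewise forces every component to be $P^7$. So the paper's argument is simply the shorter of the two.
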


\begin{proof}
    For a graph of maximum degree 6, we have $m \le 3n$ and as the smallest graph of $\mathcal{H}$ has 7 vertices, we have $h \le \frac{n}{7}$. Therefore, if $G$ has maximum degree 6, according to \Cref{thm:main} we have:
    \begin{displaymath}
        \fv(G) \le \frac{2n+m+h}{9} \le \frac{2n + 3n + \frac{n}{7}}{9} = \frac{4n}{7}.
    \end{displaymath}
\end{proof}

\Cref{cor:degreeMax6} is tight as the graph $P^7$ has maximum degree 6 and $\fv(P^7) = \frac{4n(P^7)}{7}$.

\section{Proof of \Cref{thm:main}}

% chktex-file 44

To prove \Cref{thm:main} we proceed by minimal counterexample. In what follows, $G$ denotes a minimal counterexample to \Cref{thm:main}. We will prove a series of lemmas to eliminate local configurations in $G$ and eventually reach a contradiction.

\subsection{Elimination of subgraphs isomorphic to a graph in $\mathcal{H}$}

\begin{lemma}\label{lem:degreeP7}
    Let $H$ be a subgraph of $G$ isomorphic to a graph in $\mathcal{H}$ that is maximal for the inclusion relation, that is, there does not exist a subgraph $H'$ of $G$ isomorphic to a graph in $\mathcal{H}$ and strictly containing $H$. Then $\deg(H) \ge 8$ where $\deg(H)$ denotes the number of arcs of $G$ having one endpoint in $H$ and one endpoint in $G-H$.
\end{lemma}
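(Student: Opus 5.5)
Suppose, towards a contradiction, that $\deg(H)\le 7$, and let $k$ be the number of copies of $P^7$ composing $H$. Then $H$ has $7k$ vertices, $22k-1$ arcs and $a(H)=3k$. Set $G'=G-V(H)$. Since $G'$ is strictly smaller than $G$, minimality of $G$ gives $a(G')\ge \frac{7n(G')-m(G')-h(G')}{9}$. We have $n(G')=n-7k$ and $m(G')=m-(22k-1)-\deg(H)$. Let $A'$ be a maximum induced acyclic subgraph of $G'$. The aim is to extend $A'$ by $3k$ vertices of $H$ (the exact value of $a(H)$) so that the union is still acyclic in $G$, and then to check that the resulting bound is at least $\frac{7n-m-h}{9}$.

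\textbf{Extension step.} Let $X\subseteq V(H)$ be the set of endpoints in $H$ of arcs between $H$ and $G-H$. We want to keep from $H$ an acyclic set of size $3k$ that avoids enough of $X$ to destroy every directed cycle through both $H$ and $A'$. Fact~\ref{fact:fvsGraphInH} lets us put any three prescribed vertices of $H$ into a minimal feedback vertex set, i.e. remove them, while still keeping $3k$ vertices. If $|X|\le 3$, we delete all of $X$, so the kept part of $H$ has no arcs to $A'$ and the union is acyclic. If $|X|\ge 4$, then since $|\deg(H)|\le 7$, some vertex of $X$ carries only a single outside arc, or at least we can choose three vertices of $X$ to delete so that the remaining attachments are all "in" or all "out". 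If no directed path leaves the kept part of $H$ and then returns to it, no cycle crosses $H$. This is easiest to do per copy, since the copies are joined in a tree-like way. Roughly, with $d\le 7$ outside arcs, deleting the three best endpoints leaves at most 4 arcs. I expect this counting/orientation argument to be the main obstacle, because it is not clear that three deletions always suffice. If they do not, I would fall back on the arithmetic slack described below, deleting one vertex of $A'$ per remaining cross arc, or deleting some $G'$-neighbours of $H$ instead.

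\textbf{Arithmetic step.} Assume we obtain $a(G)\ge a(G')+3k$. Then
\[ a(G)\ge \frac{7(n-7k)-(m-22k+1-\deg(H))-h(G')}{9}+3k=\frac{7n-m-h(G')+\deg(H)-1}{9}. \]
It remains to compare $h(G')$ with $h$. A component of $G$ that is in $\mathcal H$ and differs from the component containing $H$ is still a component of $G'$. New $\mathcal H$-components of $G'$ can only arise among the components of $G'$ touching $H$, and there are at most $\deg(H)$ of them. The component of $G$ containing $H$ is not in $\mathcal H$: otherwise $H$ would be that whole component, by maximality of $H$ (adding a bridge arc to a neighbouring component in $\mathcal H$ would give a larger member), and then $\deg(H)=0$.

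\textbf{Maximality and the bound.} Maximality also says that a component $C\in\mathcal H$ of $G'$ attached to $H$ by exactly one arc would produce a larger member $H\cup C+{\rm arc}$ of $\mathcal H$. So each new $\mathcal H$-component uses at least 2 of the cross arcs, giving $h(G')\le h+\lfloor \deg(H)/2\rfloor$. Hence $\deg(H)-1-h(G')\ge -h+\lceil\deg(H)/2\rceil-1\ge -h$ whenever $\deg(H)\ge1$, and the case $\deg(H)=0$ was excluded above. This yields $a(G)\ge\frac{7n-m-h}{9}$, contradicting the choice of $G$. The spare slack of up to about $\deg(H)/2$ is what I would spend in the hard extension step, deleting a few extra vertices of $A'$ when three deletions inside $H$ do not separate everything. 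The threshold $8$ in the lemma is presumably exactly where this slack runs out.
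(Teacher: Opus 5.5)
\textbf{The gap is the extension step.} Your arithmetic step is correct. Your bound $h(G')\le h+\lfloor \deg(H)/2\rfloor$ (by maximality, each new $\mathcal{H}$-component of $G-H$ must meet at least two cross arcs) is even slightly sharper than the paper's $h(G')\le h+\deg(H)-1$. But the extension step is the real content of the lemma, and you never prove $a(G)\ge a(G')+3k$. You say one can delete three vertices of $X$ so that all remaining attachments point the same way. That is true, but you give no reason for it, and you then say you are not sure three deletions suffice.

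The missing observation is a pigeonhole on directions, not on endpoints. Since $\degin(H)+\degout(H)=\deg(H)\le 7$, one of them is at most $3$; say $\degin(H)$, the number of arcs from $G-H$ into $H$. Their heads are at most three vertices of $H$. By \Cref{fact:fvsGraphInH} they lie in a minimum feedback vertex set of $H$ whose complement $A_H$ has $3k$ vertices and receives no arc from $G-H$. Then $A'\cup A_H$ is acyclic, because a directed cycle meeting both parts would need an arc from $A'$ into $A_H$, and none exists. You need no case distinction on $|X|$, no per-copy analysis and no use of the tree structure. This is exactly the paper's argument, and it actually gives $\degin(H)\ge 4$ and $\degout(H)\ge 4$.

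\textbf{The fallback would not have worked.} The slack you found is $\lceil \deg(H)/2\rceil-1\le 3$ in the numerator, i.e.\ at most $3/9$ of a vertex. So you cannot afford to discard even one vertex of $A'$ or of $G'$. For the same reason, the threshold $8$ is not where this slack runs out. It comes from \Cref{fact:fvsGraphInH} allowing exactly three prescribed vertices, together with $7=3+4$.

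A minor slip: the case $\deg(H)=0$ is not excluded by your earlier remark. It is harmless, though, since then $H$ is itself an $\mathcal{H}$-component of $G$ and $h(G')=h-1$.
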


\begin{proof}
    Suppose that $\degin(H) \le 3$, that is, there are at most 3 arcs from $G-H$ to $H$. Let $k$ denote the number of copies of $P^7$ in $H$. We construct an acyclic set $A_H$ of $H$ of size $3k$ by imposing that the (at most 3) vertices having incoming arcs from $G-H$ into $H$ are not in $A_H$ (using \Cref{fact:fvsGraphInH}). Let $A'$ be a maximal acyclic set of $G' = G-H$. Then $A = A' \cup A_H$ is an acyclic set of $G$. We also note that $|E(G')| = m - (22k +1 + \deg(H))$ and $|V(G')| = n - 7k$. Moreover, we have $h(G') \le h  + \deg(H) - 1$ because if removing $H$ adds exactly $\deg(H)$ connected components to $G'$ then $H$ was not maximal for the inclusion relation (we can add these $\deg(H)$ components to $H$ to form a larger subgraph isomorphic to a graph in $\mathcal{H}$). Then since $G'$ is not a counterexample to \Cref{thm:main} we have
    \begin{displaymath}
            | A | = 3k + |A' | \ge 3k + \frac{7(n - 7k) - (m - (22k +1 + \deg(H))) - (h  + \deg(H) - 1)}{9} =\frac{7n - m - h}{9}.
    \end{displaymath}
    This contradicts the fact that $G$ is a counterexample to \Cref{thm:main} and we deduce that $\degin(H) \ge 4$. Similarly, we show that $\degout(H) \ge 4$ and consequently $\deg(H) \ge 8$.
\end{proof}

\begin{lemma}\label{lem:degree7}
    $G$ contains no vertex of degree 7 or more.
\end{lemma}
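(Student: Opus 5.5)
Suppose $v$ is a vertex of $G$ with $\deg(v)\ge 7$. Let $G'=G-v$. We may assume that $v$ is not in any acyclic set we build, so the plan is to take a maximum acyclic set $A'$ of $G'$ and use $A=A'$ as an acyclic set of $G$. By minimality, $G'$ is not a counterexample, so
\begin{displaymath}
  |A| = a(G') \ge \frac{7(n-1) - (m - \deg(v)) - h(G')}{9} = \frac{7n - m - h}{9} + \frac{\deg(v) - 7 - (h(G') - h)}{9}.
\end{displaymath}
This is at least $\frac{7n-m-h}{9}$ exactly when $h(G') - h \le \deg(v) - 7$. The whole proof therefore reduces to bounding how many new $\mathcal{H}$-components are created by deleting $v$.

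First I dispose of the case where the component $C$ of $G$ containing $v$ lies in $\mathcal{H}$. Then $C$ is a tree-like branching of copies of $P^7$, so its degrees are at most $6+$(number of inter-copy arcs at $v$). Here $h$ drops by one when $C$ is destroyed, which gives slack. Alternatively, I would handle this case directly by taking an optimal acyclic set of $C$ together with one of $G-C$; that works because the bound is additive over components and $C$ attains equality with the $h$ term.

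So assume $C\notin\mathcal{H}$, so that $h(G')-h$ equals the number $t$ of components of $C-v$ lying in $\mathcal{H}$.

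The key step is to show $t\le \deg(v)-7$. Let $H_1,\dots,H_t$ be these components. Each $H_i$ is a component of $G-v$, so its only arcs leaving it in $G$ go to $v$. I want to invoke \Cref{lem:degreeP7}, but $H_i$ need not be maximal in $G$, since adding $v$ might not create an $\mathcal{H}$-graph anyway. Taking a maximal $\mathcal{H}$-subgraph $H\supseteq H_i$, either $H=H_i$, which gives at least $8$ arcs between $H_i$ and $v$ and is impossible for an oriented graph (only one arc between $v$ and each vertex is allowed, but $H_i$ has $\ge 7$ vertices, so this needs care), or $H$ contains $v$. The cleaner route is to rerun the argument of \Cref{lem:degreeP7} directly: if $H_i$ receives at most 3 arcs from $v$, extend those endpoints to a minimal feedback vertex set by \Cref{fact:fvsGraphInH} and remove $H_i$ from $G$, which yields a contradiction by the same computation. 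Hence each $H_i$ has at least 4 in-arcs from $v$ and, symmetrically, at least 4 out-arcs to $v$, so it contributes at least 8 to $\deg(v)$. Therefore $\deg(v)\ge 8t$. For $t\ge1$ this gives $\deg(v)-7\ge 8t-7\ge t$. For $t=0$ we need $\deg(v)\ge 7$, which holds by assumption.

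The main obstacle is the accounting when $H_i$ is removed in the rerun argument: its boundary arcs all go to $v$, and I need $h(G-H_i)\le h+\deg(H_i)-1$ without the maximality hypothesis. Here $G-H_i$ still contains $C-H_i$, which includes $v$, so removing $H_i$ creates no new components at all. That makes the count easier than in \Cref{lem:degreeP7}, giving slack $\deg(H_i)+1-0\ge 1$ even if $\deg(H_i)$ is small. So I expect the computation to go through, with the bounds $\degin(H_i),\degout(H_i)\ge 4$ following as in \Cref{lem:degreeP7}.
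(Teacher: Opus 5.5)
Your argument follows the paper's route: delete $v$, keep a maximum acyclic set of $G-v$, and show $h(G-v)-h\le\deg(v)-7$ by charging at least $8$ arcs at $v$ to each new $\mathcal{H}$-component. The paper does this in one line by citing \Cref{lem:degreeP7}. Your choice to rerun that argument directly on the components $H_i$ of $G-v$ is sound: these components are induced, their only boundary arcs go to $v$, and rerunning avoids the maximality hypothesis, which they need not satisfy. Your additivity argument for the case $C\in\mathcal{H}$ is also correct, since $7\cdot 7k-(22k-1)-1=9\cdot 3k$.

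One step is wrong as written: the accounting in your last paragraph. Connectivity of $C-H_i$ only shows that the number of components does not grow. It does not give $h(G-H_i)\le h$, because $C-H_i$ can lie in $\mathcal{H}$ even though $C$ does not (e.g.\ $C-H_i\cong P^7$ with $H_i$ attached to $v$ by two arcs). So in general $h(G-H_i)=h+\delta$ with $\delta\in\{0,1\}$. Moreover, $H_i$ has $22k-1$ arcs, not $22k+1$; the figure in the proof of \Cref{lem:degreeP7} is a typo, and $22k-1$ is what your own equality claim in the case $C\in\mathcal{H}$ uses. The computation therefore actually yields
\[ |A| \;\ge\; \frac{7n-m-h+\bigl(\deg(H_i)-1-\delta\bigr)}{9}, \]
so the slack is $\deg(H_i)-1-\delta$, not $\deg(H_i)+1$. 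This is still nonnegative, which is all you need. First, $\deg(H_i)\ge 1$, since $C$ is connected and $v\notin V(H_i)$. Second, if $\deg(H_i)=1$ and $\delta=1$, then $C$ is obtained from $C-H_i\in\mathcal{H}$ and $H_i\in\mathcal{H}$ by joining them with a single arc, so $C\in\mathcal{H}$, contrary to your assumption. With this repair, $\degin(H_i),\degout(H_i)\ge 4$ holds for every $i$, so $\deg(v)\ge 8t$, and the lemma follows as you describe.
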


\begin{proof}
    Let $x$ be a vertex of degree at least 7. Let $G' = G - x$. By \Cref{lem:degreeP7}, $h(G') \le h + (\deg(x) - 7)$. Let $A$ be a maximal acyclic set of $G'$. It is also an acyclic set of $G$ and since $G'$ is not a counterexample to \Cref{thm:main} we have
    \begin{displaymath}
            | A | \ge \frac{7(n - 1) - (m - \deg(x)) - (h  + \deg(x) - 7)}{9} =\frac{7n - m - h}{9}.
    \end{displaymath}
    This contradicts the fact that $G$ is a counterexample to \Cref{thm:main} and we deduce that $G$ contains no vertex of degree 7 or more.
\end{proof}

\begin{lemma}\label{lem:hNul}
    $G$ has no subgraphs isomorphic to a graph in $\mathcal{H}$. In particular, for all $X \subseteq V(G)$, we have $h(G-X) = 0$.
\end{lemma}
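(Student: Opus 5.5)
Combine \Cref{lem:degreeP7} with \Cref{lem:degree7} and count degrees inside a copy of $P^7$. Suppose, for contradiction, that $G$ contains a subgraph isomorphic to a graph in $\mathcal{H}$. Among all such subgraphs, choose one, $H$, that is maximal for inclusion; such an $H$ exists since $G$ is finite. By \Cref{lem:degreeP7}, at least $8$ arcs have exactly one endpoint in $H$. The goal is to show that no arc leaves $H$ at all.

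Every vertex $v$ of $H$ lies in some copy of $P^7$ contained in $H$. Since $P^7$ is $6$-regular, $v$ already has $6$ neighbours inside that copy, all of them in $H$. By \Cref{lem:degree7}, $\deg_G(v) \le 6$. Hence every arc of $G$ incident to $v$ is one of the $6$ arcs of its copy of $P^7$, and in particular no arc of $G$ joins $v$ to a vertex of $G-H$.

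It follows that $\deg(H) = 0$, contradicting $\deg(H) \ge 8$. So $G$ has no subgraph isomorphic to a graph in $\mathcal{H}$.

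We also observe that a vertex of $H$ that is an endpoint of one of the connecting arcs of $H$ would have degree $7$, so \Cref{lem:degree7} forces $H$ to consist of a single copy of $P^7$. This is consistent with the argument above but not needed for it.

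For the second statement, let $X \subseteq V(G)$. Any connected component of $G - X$ belonging to $\mathcal{H}$ is a subgraph of $G$ isomorphic to a graph in $\mathcal{H}$, which we have just excluded. Hence $h(G-X)=0$, and in particular $h(G)=0$.

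The only point requiring care is that \Cref{lem:degreeP7} applies to a maximal subgraph, which is why $H$ is chosen maximal. I do not expect a real obstacle here: the lemma is a direct combination of the two preceding lemmas.
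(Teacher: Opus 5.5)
Your proof is correct, but it is organized differently from the paper's.

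The paper does not invoke \Cref{lem:degreeP7} in this proof. It has two steps.
\begin{enumerate}
\item It shows $h(G)=0$ by a direct count. By \Cref{lem:degree7}, a component of $G$ lying in $\mathcal{H}$ must be a single $P^7$, and then $a(G)\ge 3+\frac{7(n-7)-(m-21)-(h-1)}{9}=\frac{7n-m-h}{9}$, a contradiction.
\item It handles a component $H$ of $G-X$ in $\mathcal{H}$ with the same degree-saturation argument you use. Since $h(G)=0$, such an $H$ is not a component of $G$, so some vertex of $H$ would have degree at least $7$.
\end{enumerate}

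You instead apply \Cref{lem:degreeP7} to an inclusion-maximal copy, and use saturation to force $\deg(H)=0$, contradicting $\deg(H)\ge 8$. The only case of \Cref{lem:degreeP7} you actually use is the one with $\deg(H)=0$. There, saturation makes $H$ an induced $P^7$ component. The paper's first step writes out exactly this computation.

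So the two proofs have the same logical dependencies. Both rest on \Cref{lem:degree7}, which itself relies on \Cref{lem:degreeP7}.

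What each route buys:
\begin{itemize}
\item Yours is shorter. It also proves the first sentence of the statement (no subgraph isomorphic to a member of $\mathcal{H}$ at all) explicitly. The paper's write-up only proves the claims about $h(G)$ and $h(G-X)$, and leaves the subgraph claim implicit in the same saturation remark.
\item The paper's version is self-contained at this step. It does not appeal to the $\deg(H)=0$ case of \Cref{lem:degreeP7}, whose justification of $h(G')\le h+\deg(H)-1$ in that case is only sketched there.
\end{itemize}
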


\begin{proof}
    First we show that $h(G) = 0$. Suppose that $h(G) > 0$. Then there exists a connected component $H$ of $G$ isomorphic to a graph in $\mathcal{H}$. Since $G$ contains no vertex of degree 7 or more, this connected component is necessarily $P^7$. We know that $a(P^7) = 3$ and that $P^7$ has 7 vertices and 21 arcs. Then since $G-H$ is not a counterexample to \Cref{thm:main} we have
    \begin{displaymath}
        a(G) = a(H) + a(G-H) \ge 3 + \frac{7(n - 7) - (m - 21) - (h  - 1)}{9} =\frac{7n - m - h}{9}.
    \end{displaymath}
    This contradicts the fact that $G$ is a counterexample to \Cref{thm:main} and we deduce that $h(G) = 0$.

    Now suppose there exists a set of vertices $X$ such that $h(G-X) > 0$. Then there exists a connected component $H$ of $G-X$ isomorphic to a graph in $\mathcal{H}$. Since $h(G) = 0$, $H$ was not a connected component of $G$. This means that there exists a vertex of $H$ that is connected to a vertex of $G-H$ by an arc. However, all vertices of $H$ have degree at least 6, so there exists a vertex of $H$ of degree at least 7 in $G$, which contradicts \Cref{lem:degree7}.
\end{proof}

\subsection{Elimination of degree less than 3 vertices}

The following proposition will be used repeatedly in the next lemmas to eliminate local configurations in $G$. It gives a sufficient condition to show that $G$ is not a minimal counterexample to \Cref{thm:main} by removing a set of vertices and adding some of them to an acyclic set of the remaining graph.

\begin{proposition}\label{prop:reduction}
    Let $(\alpha, \beta, \gamma)$ be a triple of integers satisfying $7\alpha - \beta = 9 \gamma$. If there exists $X \subset V(G)$ such that: 
    \begin{itemize}
        \item $X$ is a set of $\alpha$ vertices, i.e., $|X| = \alpha > 0$,
        \item Removing $X$ from $G$ deletes at most $\beta$ arcs, i.e., $\beta \le |E(G)| - |E(G-X)|$,
        \item For every acyclic set $A'$ of $G' = G-X$, one can find an acyclic set $A$ of $G$ with $|A| \ge |A'| + \gamma$.
    \end{itemize}
    Then $G$ is not a minimal counterexample to \Cref{thm:main}.
\end{proposition}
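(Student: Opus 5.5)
The plan is to show directly that such a set $X$ contradicts the fact that $G$ is a counterexample. Set $G' = G - X$. Since $|X| = \alpha > 0$, the graph $G'$ has strictly fewer vertices than $G$. By minimality of $G$ it is therefore not a counterexample, so $a(G') \ge \frac{7n(G') - m(G') - h(G')}{9}$.

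The first step is to evaluate each parameter of $G'$.
\begin{itemize}
\item We have $n(G') = n - \alpha$.
\item The second hypothesis gives $m(G') \le m - \beta$.
\item By \Cref{lem:hNul}, applied with this $X$, we have $h(G') = 0$.
\end{itemize}
Hence $a(G') \ge \frac{7(n-\alpha) - (m-\beta)}{9} = \frac{7n - m}{9} - \frac{7\alpha - \beta}{9} = \frac{7n-m}{9} - \gamma$, using $7\alpha - \beta = 9\gamma$.

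The second step is to lift back to $G$. Take a maximum acyclic set $A'$ of $G'$, so $|A'| = a(G')$. The third hypothesis provides an acyclic set $A$ of $G$ with $|A| \ge |A'| + \gamma \ge \frac{7n - m}{9}$.

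The last step uses $h(G) = 0$, which is the first part of \Cref{lem:hNul}. It gives $a(G) \ge |A| \ge \frac{7n-m-h}{9}$, contradicting that $G$ is a counterexample to \Cref{thm:main}.

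There is no real obstacle. The only points needing care are the direction of the inequality on $m(G')$ (deleting at least $\beta$ arcs only helps) and the fact that \Cref{lem:hNul} kills both $h(G)$ and $h(G')$.
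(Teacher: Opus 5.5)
Your proposal is correct and follows the paper's proof essentially step for step. You apply minimality to $G-X$, use $h(G-X)=0$ from \Cref{lem:hNul} and $m(G-X)\le m-\beta$, absorb the loss through $7\alpha-\beta=9\gamma$, lift to $G$ via the third hypothesis, and conclude with $h(G)=0$.
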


\begin{proof}
    Let $G$ be a minimal counterexample to \Cref{thm:main}. Take a triple $(\alpha, \beta, \gamma)$ such that $7\alpha - \beta = 9 \gamma$ and suppose there exists a set $X \subset V(G)$ satisfying the conditions of the proposition for this triple. By minimality of $G$ and \Cref{lem:hNul}, we have $a(G-X) \ge \frac{7(n-\alpha) - (m-\beta)}{9}$. Let $A$ be an acyclic set of $G-X$ of size at least $\frac{7(n-\alpha) - (m-\beta)}{9}$. By hypothesis there exists an acyclic set $A'$ of $G$ with $|A'| \ge |A| + \gamma$. Combining the inequalities yields
    \begin{displaymath}
        a(G) \ge |A'| \ge |A| + \gamma \ge \frac{7(n-\alpha) - (m-\beta)}{9} + \gamma = \frac{7n - m + (9\gamma - 7\alpha + \beta)}{9} = \frac{7n - m}{9}.
    \end{displaymath}
    This contradicts the fact that $G$ is a counterexample (recall that $h(G) = 0$ from \Cref{lem:hNul}).
\end{proof}

The \Cref{tab:triples} lists the triples $(\alpha, \beta, \gamma)$ that will be used in the next lemmas to eliminate local configurations in $G$ with \Cref{prop:reduction}.

\rowcolors{2}{gray!15}{white}
\begin{table}[ht]
    \centering
    \begin{tabular}{|c|c|c|}
        \hline
        $\alpha$ & $\beta$ & $\gamma$ \\
        \hline
        1 & -2 & 1 \\
        2 & 5 & 1 \\
        3 & 3 & 2 \\
        3 & 12 & 1 \\
        4 & 10 & 2 \\
        5 & 8 & 3 \\
        5 & 17 & 2 \\
        6 & 15 & 3 \\
        7 & 13 & 4 \\
        7 & 22 & 3 \\
        \hline
    \end{tabular}
    \caption{Triples $(\alpha, \beta, \gamma)$ satisfying $7\alpha - \beta = 9\gamma$ used in the proof of \Cref{thm:main}.}\label{tab:triples}
\end{table}

\begin{lemma}\label{lem:degree1}
    Every vertex of $G$ has both in-degree and out-degree at least 1. In particular, $G$ has no vertex of degree 0 or 1.
\end{lemma}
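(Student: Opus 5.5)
We argue by contradiction and apply \Cref{prop:reduction} with the first triple of \Cref{tab:triples}, namely $(\alpha,\beta,\gamma)=(1,-2,1)$. Suppose some vertex $x$ of $G$ has in-degree $0$; the case of out-degree $0$ is symmetric, obtained by reversing all arcs. Take $X=\{x\}$, so $\alpha=1>0$.

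\emph{Arc condition.} Removing $x$ deletes $\deg(x)\ge 0\ge -2$ arcs, so the requirement $\beta\le |E(G)|-|E(G-X)|$ holds trivially with $\beta=-2$. The slack here is large; only the in-degree-zero property of $x$ is actually used.

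\emph{Extension condition.} Let $A'$ be any acyclic set of $G'=G-x$, and set $A=A'\cup\{x\}$. Any directed cycle in $G[A]$ must pass through $x$, since $G[A']$ is acyclic. A directed cycle through $x$ must enter $x$ along an arc, which is impossible because $x$ has no in-neighbour. Hence $A$ is acyclic and $|A|=|A'|+1=|A'|+\gamma$.

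All three hypotheses of \Cref{prop:reduction} are therefore satisfied, and $G$ is not a minimal counterexample, a contradiction. So every vertex has in-degree and out-degree at least $1$. Since $G$ is oriented (no digons), the in-arc and out-arc at a vertex are distinct, so every vertex has degree at least $2$. This excludes degrees $0$ and $1$.

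There is no real obstacle in this argument. The only point to check is that \Cref{prop:reduction} is stated for any $\beta$ bounded above by the number of deleted arcs, so a negative $\beta$ is allowed. The triple $(1,-2,1)$ satisfies $7\cdot 1-(-2)=9=9\cdot 1$, and \Cref{lem:hNul} guarantees $h(G-X)=0$, so the proposition applies as stated.
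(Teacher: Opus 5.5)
Your proof is correct and is essentially the paper's argument. The paper applies \Cref{prop:reduction} with the triple $(1,-2,1)$ to $X=\{x\}$ for a vertex of out-degree $0$, with the in-degree-$0$ case symmetric, which is your argument with the roles reversed.
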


\begin{proof}
    Let $x\in V(G)$ with $\degout(x)=0$. Removing $x$ deletes $\deg(x)\ge0>-2$ arcs of $G$. Moreover, for every acyclic set $A'$ of $G'=G-x$, $A=A'+x$ is acyclic in $G$. Applying \Cref{prop:reduction} with the triple $(1,-2,1)$ shows that $G$ is not a minimal counterexample. The same argument applies if $\degin(x)=0$.
\end{proof}

\begin{lemma}\label{lem:degree2}
    $G$ contains no vertex of degree 2.
\end{lemma}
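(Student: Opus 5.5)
We argue by contradiction: let $x$ be a vertex of degree 2 in $G$. By \Cref{lem:degree1}, $x$ has exactly one in-neighbour $u$ and one out-neighbour $w$, so $u \to x \to w$. Since $G$ is oriented, $u \neq w$. The aim is to apply \Cref{prop:reduction} with a set $X$ of small size, using triples from \Cref{tab:triples}. The natural triple is $(2,5,1)$: we remove $2$ vertices, destroy at least $5$ arcs, and gain $1$ vertex in the acyclic set.

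Take $X = \{x, u\}$ (or symmetrically $\{x,w\}$), chosen so that the removed set kills many arcs. The plan is as follows.

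First we count the arcs. Removing $\{x,u\}$ deletes the two arcs at $x$ (including $ux$) plus the remaining $\deg(u)-1$ arcs at $u$. The total is $\deg(u)+1$, so we need $\deg(u)\ge 4$. We pick whichever of $u,w$ has the larger degree.

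Second we extend the acyclic set. Given an acyclic set $A'$ of $G-X$, we set $A = A' \cup \{x\}$. It is acyclic in $G$, because $x$ has only one in-neighbour, $u$, which is not in $A$. So any directed cycle through $x$ must use $u$, which is absent. This gives $\gamma = 1$, and \Cref{prop:reduction} with $(2,5,1)$ concludes whenever $\max(\deg u, \deg w) \ge 4$.

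The remaining, and main, obstacle is the case where both $u$ and $w$ have degree at most 3. In that case we enlarge $X$ and use the other triples.

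If $\deg(u)=2$, then $u$ itself is a degree-2 vertex. We follow the chain of degree-2 vertices forming a directed path $P$. If the chain closes into a directed cycle that is a whole component, a direct count finishes: a cycle of length $k$ has $a = k-1 \ge (7k-k)/9$. Otherwise the chain ends at vertices of degree 3.

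So we reduce to the case where $x$ has neighbours $u, w$ of degree exactly 3. We try $X = \{u,x,w\}$ with triple $(3,3,2)$: this removes at least $2+2+2 = 6 \ge 3$ arcs (minus possibly the arc $uw$ or $wu$, leaving at least 5). We then add two of the three vertices back.

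- Adding $x$ and $u$ works, since $x$'s only out-neighbour $w$ is not in $A$. Any cycle through $u$ within $A'\cup\{u,x\}$ must avoid $x$, since $x$ is a sink there, so it lives in $A'\cup\{u\}$. This may create a cycle.

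- To guarantee acyclicity we instead add $x$ together with whichever of $u,w$ cannot close a cycle. Since $u$ has degree 3 with one arc to $x$, it has only two arcs into $G-X$. If $u$ has in- and out-neighbours in $A'$, a cycle could form.

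So the case analysis I expect to be hardest is handling $u$ with one in- and one out-neighbour in $A'$ (and similarly for $w$). To handle it, I would use the slack $(3,3,2)$: we need only $\beta = 3$ but typically delete 5 or 6 arcs. I would then switch to a larger triple such as $(4,10,2)$ or $(5,8,3)$ by also removing a neighbour of $u$ outside $X$, so that adding $u$ back is safe because one side of $u$ becomes empty.
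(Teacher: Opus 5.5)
Your first case is correct: if $\max(\deg u,\deg w)\ge 4$, deleting $x$ together with the higher-degree neighbour and putting $x$ back is the paper's $(2,5,1)$ reduction. The gap is that you never close the remaining case. Your last paragraph only says you ``would switch'' to $(4,10,2)$ or $(5,8,3)$, and neither is supported. Take $X=\{x,u,w,u'\}$ with $u'$ another neighbour of $u$. Then $\deg x=2$, $\deg u=\deg w=3$, and at least the three arcs $ux$, $xw$ and the arc between $u'$ and $u$ lie inside $X$. So at most $\deg(u')+5$ arcs are deleted, and reaching $10$ would need $\deg(u')\ge 5$, which nothing guarantees. The triple $(5,8,3)$ would require putting three vertices back, which you do not address. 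The chain-following step is also not a valid reduction. If the chain of degree-2 vertices has length at least $2$, each vertex of the chain has a degree-2 neighbour, so you never reach the situation ``both neighbours of $x$ have degree exactly $3$''.

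The missing observation is that, by \Cref{lem:degree1}, $\deg(u)\le 3$ forces $\degout(u)=1$ or $\degin(u)=1$. Each subcase is then settled by $(3,3,2)$ with $A=A'+u+x$, and the degree of $w$ plays no role.
\begin{itemize}
\item If $\degout(u)=1$, take $X=\{x,u,w\}$. Then $x$ is a sink in $A$ and the only out-neighbour of $u$ is $x$, so no cycle passes through $u$ or $x$. The situation you worried about, $u$ having both an in- and an out-neighbour in $A'$, cannot occur here.
\item If $\degin(u)=1$, let $u^-$ be the unique in-neighbour of $u$ and take $X=\{x,u,u^-\}$ instead of $\{x,u,w\}$. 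Now $u$ is a source in $A$, and any cycle through $x$ would have to use $u$.
\end{itemize}
In both subcases at least three arcs are deleted: the two arcs at $x$ plus an in-arc of $u$, which exists by \Cref{lem:degree1}. Your instinct to empty one side of $u$ by deleting one of its neighbours was right, but that neighbour should replace $w$ in $X$ rather than enlarge $X$. Once these two subcases are handled, only $\deg(u)\ge 4$ remains, and your $(2,5,1)$ argument finishes the proof, exactly as in the paper.
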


\begin{proof}
    Let $x\in V(G)$ be a vertex of degree 2. By \Cref{lem:degree1}, $x$ has one in-neighbor $u$ and one out-neighbor $v$.

    If $\degout(u)=1$ then removing $x,u,v$ deletes at least 3 arcs. Moreover, for every acyclic set $A'$ of $G'=G-x-u-v$, $A=A'+u+x$ is acyclic in $G$. Applying \Cref{prop:reduction} with the triple $(3,3,2)$ gives a contradiction (see \Cref{subfig:degree2a}).

    If $\degin(u)=1$, let $w$ be the in-neighbor of $u$. Removing $x,u,w$ deletes at least 3 arcs. For every acyclic set $A'$ of $G'=G-x-u-w$, $A=A'+u+x$ is acyclic in $G$. Again \Cref{prop:reduction} with $(3,3,2)$ yields a contradiction (see \Cref{subfig:degree2b}).

    Hence $\deg(u)=\degin(u)+\degout(u)\ge4$. Then removing $x,u$ deletes at least 5 arcs. For every acyclic set $A'$ of $G'=G-x-u$, $A=A'+x$ is acyclic in $G$. Applying \Cref{prop:reduction} with $(2,5,1)$ contradicts minimality (see \Cref{subfig:degree2c}).
\end{proof}

\begin{figure}[ht]
    \centering
    \begin{subfigure}[ht]{0.32\textwidth}
        \centering
        \includegraphics[scale = 0.6]{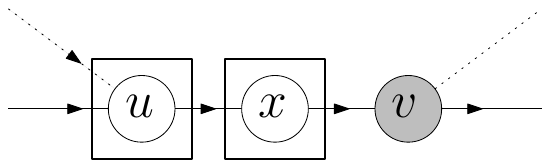}
        \caption{Case $\degout(u)=1$.}\label{subfig:degree2a}
    \end{subfigure}
    \begin{subfigure}[ht]{0.32\textwidth}
        \centering
        \includegraphics[scale = 0.6]{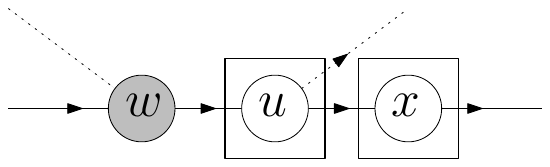}
        \caption{Case $\degin(u)=1$.}\label{subfig:degree2b}
    \end{subfigure}
    \begin{subfigure}[ht]{0.32\textwidth}
        \centering
        \includegraphics[scale = 0.6]{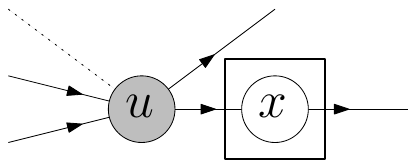}
        \caption{Case $\deg(u)\ge4$.}\label{subfig:degree2c}
    \end{subfigure}
    \caption{Illustration of the proof of \Cref{lem:degree2}. Removed vertices that are not added in the acyclic set are colored in gray, vertices added to the acyclic set are boxed.}\label{fig:degree2}
\end{figure}

\begin{lemma}\label{lem:degree3}
    Every vertex of $G$ has in-degree and out-degree at least 2. In particular, $G$ contains no vertex of degree 3.
\end{lemma}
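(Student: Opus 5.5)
The plan is a single application of \Cref{prop:reduction} with the triple $(2,5,1)$ from \Cref{tab:triples}, together with the symmetry given by reversing all arcs.

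First, the reduction to one case. Reversing every arc of an oriented graph preserves $n$, $m$, $h$ and the family of acyclic sets. So the hypotheses on $G$ are symmetric under reversal, and it suffices to treat a vertex $x$ with $\degin(x)=1$; the case $\degout(x)=1$ follows by the same argument on the reverse. By \Cref{lem:degree1}, $\degin(x)\ge 1$ for every vertex, so $\degin(x)=1$ is the only remaining way to have in-degree below 2. By \Cref{lem:degree1} and \Cref{lem:degree2}, every vertex of $G$ has degree at least 3.

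Now let $u$ be the unique in-neighbor of $x$, and set $X=\{x,u\}$, so $\alpha=2$. We have $\deg(x)\ge 3$ and $\deg(u)\ge 3$, and the arc $ux$ is the only arc between them since $G$ is oriented. Hence deleting $X$ removes at least $\deg(x)+\deg(u)-1\ge 5$ arcs, giving $\beta=5$.

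Next, the extension of acyclic sets. Let $A'$ be any acyclic set of $G-X$, and put $A=A'\cup\{x\}$. Any directed cycle in $G[A]$ would have to use $x$, and hence enter $x$ through its only in-neighbor $u$. But $u\notin A$, so $x$ is a source in $G[A]$ and $A$ is acyclic. This gives $\gamma=1$, and \Cref{prop:reduction} shows $G$ is not a minimal counterexample, a contradiction.

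Finally, the consequence for degree 3. A vertex of degree 3 would have in-degree plus out-degree equal to 3, so one of the two is at most 1, which contradicts what was just proved.

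I do not expect a real obstacle. The only points to check are that the arc count uses the already-established lower bound of 3 on $\deg(u)$, and that the triple $(2,5,1)$ satisfies $7\cdot 2-5=9$.
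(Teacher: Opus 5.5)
Your proof is correct and is essentially the paper's argument: the paper treats the case $\degout(x)=1$, deletes $x$ together with its unique out-neighbor, adds $x$ back as a sink, and applies \Cref{prop:reduction} with $(2,5,1)$, which is exactly the mirror image of your in-degree case. Your appeal to arc reversal is valid but not needed, since the mirrored argument can be run directly; it is valid because $P^7$ is isomorphic to its converse, so $h$ is indeed preserved.
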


\begin{proof}
    Let $x\in V(G)$ with $\degout(x)=1$. Let $u$ be the out-neighbor of $x$. By \Cref{lem:degree2}, $\deg(u),\deg(x)\ge3$, so removing $x,u$ deletes at least 5 arcs. For every acyclic set $A'$ of $G'=G-x-u$, $A=A'+x$ is acyclic in $G$. Applying \Cref{prop:reduction} with $(2,5,1)$ contradicts minimality (see \Cref{fig:degree3}).
\end{proof}

\begin{figure}[ht]
    \centering
    \includegraphics[scale = 0.6]{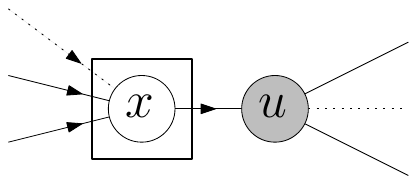}
    \caption{Illustration of the proof of Lemma~\ref{lem:degree3}.}\label{fig:degree3}
\end{figure}

\subsection{Elimination of degree 5 vertices}

\begin{lemma}\label{lem:2OutNeighbors}
    Let $x \in V(G)$ such that $\degout(x) = 2$ (respectively $\degin(x) = 2$). Let $u,v$ be the two out-neighbors (respectively in-neighbors) of $x$. We have $\deg(x) + \deg(u) + \deg(v) \le 14$. Moreover, if $\deg(x) + \deg(u) + \deg(v) = 14$ then $u$ and $v$ are adjacent.
\end{lemma}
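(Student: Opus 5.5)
We handle the case $\degout(x)=2$; the case $\degin(x)=2$ is symmetric after reversing all arcs. The plan is to use \Cref{prop:reduction} with $X=\{x,u,v\}$, so $\alpha=3$, and to add only the vertex $x$ to an acyclic set of $G-X$, so $\gamma=1$. Since $x$ has no out-neighbor in $G-X$, any acyclic set $A'$ of $G'=G-x-u-v$ extends to the acyclic set $A'+x$ of $G$. Indeed, every directed cycle through $x$ must leave $x$ through $u$ or $v$, and neither lies in $A'$. The table gives the triple $(3,12,1)$, so we get a contradiction as soon as removing $X$ deletes at least $12$ arcs.

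\textbf{Counting the deleted arcs.} Let $e$ be the number of arcs inside $X$. Removing $X$ deletes $\deg(x)+\deg(u)+\deg(v)-e$ arcs. The arcs $xu$ and $xv$ are always present, so $e\ge 2$. Since $G$ is oriented and has no digons, $e=3$ if $u$ and $v$ are adjacent and $e=2$ otherwise. Hence:
\begin{itemize}
  \item if $u,v$ are non-adjacent and $\deg(x)+\deg(u)+\deg(v)\ge 14$, at least $12$ arcs are deleted;
  \item if $u,v$ are adjacent and $\deg(x)+\deg(u)+\deg(v)\ge 15$, again at least $12$ arcs are deleted.
\end{itemize}
In both cases \Cref{prop:reduction} with $(3,12,1)$ contradicts the minimality of $G$.

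\textbf{Conclusion.} The two cases show that $\deg(x)+\deg(u)+\deg(v)\ge 15$ is impossible whatever the adjacency of $u$ and $v$, so the sum is at most $14$. They also show that a sum of exactly $14$ forces $u$ and $v$ to be adjacent.

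The only point needing care is the exact count of arcs inside $X$, and in particular that no digon can add a fourth arc. Beyond that, the argument is a direct application of \Cref{prop:reduction}, and \Cref{lem:hNul} ensures $h(G-X)=0$ so the proposition applies.
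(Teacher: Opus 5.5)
Your proof is correct and matches the paper's argument: remove $X=\{x,u,v\}$, put $x$ back into the acyclic set, and apply \Cref{prop:reduction} with the triple $(3,12,1)$. Your explicit count of the arcs inside $X$ (exactly two or three, since there are no digons) just makes precise the step the paper summarizes as ``removing $x,u,v$ deletes at least 12 arcs''.
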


\begin{proof}
    Suppose that $\deg(x) + \deg(u) + \deg(v) \ge 15$ or that $\deg(x) + \deg(u) + \deg(v) = 14$ with $u$ and $v$ non-adjacent. Under these hypotheses, removing $x,u,v$ deletes at least 12 arcs. Moreover, for every acyclic set $A'$ of $G' = G-x-u-v$, $A = A' + x$ is an acyclic set in $G$. By applying \Cref{prop:reduction} with the triple $(3,12,1)$, we find that $G$ is not a minimal counterexample to \Cref{thm:main} (see \Cref{fig:2voisins_sortant}).
\end{proof}

\begin{figure}[ht]
    \centering
    \includegraphics[scale = 0.6]{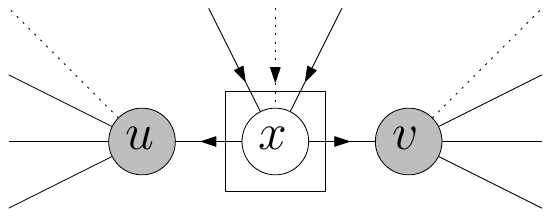}
    \caption{Illustration of the proof of Lemma~\ref{lem:2OutNeighbors}.}\label{fig:2voisins_sortant}
\end{figure}

\begin{lemma}\label{lem:degree5}
    $G$ contains no vertex of degree 5.
\end{lemma}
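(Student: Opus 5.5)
We argue by contradiction: let $x$ be a vertex of degree $5$. By \Cref{lem:degree3}, $x$ has in-degree and out-degree at least $2$, so $(\degin(x),\degout(x))$ is $(2,3)$ or $(3,2)$. Reversing all arcs preserves the statement of \Cref{thm:main}, and all the lemmas proved so far are symmetric under reversal. So we may assume $\degout(x)=2$, with out-neighbours $u,v$.

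\textbf{Step 1: degrees of $u$ and $v$.} By \Cref{lem:degree2,lem:degree3,lem:degree7}, every vertex has degree $4$, $5$ or $6$. \Cref{lem:2OutNeighbors} gives $\deg(u)+\deg(v)\le 9$. Hence, up to renaming, $\deg(u)=4$ and $\deg(v)\in\{4,5\}$. If $\deg(v)=5$, the sum is $14$, so $u$ and $v$ are adjacent. Since $\deg(u)=4$, \Cref{lem:degree3} gives $\degin(u)=\degout(u)=2$, and likewise for $v$ when $\deg(v)=4$.

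\textbf{Step 2: the basic reduction.} We remove $X=\{x,v\}\cup\{u\}\cup N^+(u)$ and add $x$ and $u$ back to any acyclic set $A'$ of $G-X$. This is acyclic because all out-neighbours of $u$ are deleted, so $u$ is a sink in $A'\cup\{x,u\}$. Also, the only remaining out-neighbour of $x$ is $u$, so no directed cycle can pass through $x$ or $u$.

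\textbf{Case $v\in N^+(u)$.} Then $X=\{x,u,v,w\}$, where $w$ is the other out-neighbour of $u$. The degree sum of $X$ is at least $5+4+4+4=17$. There are at most $6$ arcs inside $X$, so at least $11\ge 10$ arcs are deleted. \Cref{prop:reduction} with the triple $(4,10,2)$ then gives a contradiction.

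\textbf{Case $u\to v$ is not an arc.} If $v\to u$, apply the same argument with $u$ and $v$ exchanged: add $x,v$ and remove $N^+(v)\ni u$. This requires $\degout(v)=2$, which holds when $\deg(v)=4$. If $\deg(v)=5$, we check whether $\degout(v)=2$, and otherwise switch to the in-neighbour version.

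In the remaining case $u,v$ are non-adjacent, so both have degree $4$ and both have out-degree $2$. Here we use the stronger reduction: remove $X=\{x,u,v\}\cup N^+(u)\cup N^+(v)$ and add $x,u,v$ back. In $A'\cup\{x,u,v\}$ both $u$ and $v$ are sinks, and $x$ only points to them, so the set is acyclic. We have $|X|\le 7$, and according to $|X|\in\{5,6,7\}$ we aim for the triples $(5,8,3)$, $(6,15,3)$ or $(7,22,3)$.

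\textbf{Step 3: the arc count (main obstacle).} The difficulty is showing enough arcs leave $X$ when $N^+(u)$ and $N^+(v)$ overlap or are densely interconnected. For instance, with $|X|=7$ we need $22$ deleted arcs. The degree sum is at least $5+4\cdot 6=29$, so at most $7$ arcs may lie inside $X$, but a priori up to about $10$ could.

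First, I would count the internal arcs explicitly. The arcs $x\to u$, $x\to v$ and the four arcs out of $u,v$ are forced; every further internal arc raises some degree, or is incident to $x$, which has degree $5$.

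Second, in the dense subcases, I would apply \Cref{lem:2OutNeighbors} to $u$ and to $v$ (each has out-degree $2$). This bounds the degrees of $N^+(u)$ and $N^+(v)$: their degree sums are at most $10$, and equal to $10$ only if the two out-neighbours are adjacent. These bounds should force either a cheaper configuration already covered by Step 2, or enough low-degree vertices to apply one of the triples from \Cref{tab:triples}.

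If some subcase still resists, I would fall back on choosing which vertices to add more cleverly. For example, one can add a degree-$4$ vertex of $N^+(u)$ instead of $u$, trading one extra deleted vertex for the extra arcs it removes.
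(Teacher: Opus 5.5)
Your treatment of the case where $u$ and $v$ are adjacent is sound and is essentially the paper's first claim. The subcase $v\to u$, $\deg(v)=5$, $\degout(v)=3$ should be written out, since ``the in-neighbour version'' is not specified. Delete $x,u,v$ and the second in-neighbour $w$ of $v$, and add $u$ and $v$. Then $v$ is a source, $u$ is entered only from $v$, and at least $18-6=12$ arcs disappear, so $(4,10,2)$ applies.

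\emph{The gap is the non-adjacent case, which is where the real difficulty of the lemma lies.} Your Step 3 is a plan, not an argument, and the plan as stated fails: arc counting for $X=\{x,u,v\}\cup N^+(u)\cup N^+(v)$ can fall short even when every vertex has minimum degree. Take $N^+(u)=\{a,b\}$ and $N^+(v)=\{c,d\}$ distinct, all of degree $4$, with arcs $a\to x$ and $c\to x$, and all remaining neighbours of degree $4$ outside $X$.
\begin{itemize}
\item $X$ spans $8$ arcs and has degree sum $29$, so only $21$ arcs are deleted. That is one short of the $22$ required by $(7,22,3)$.
\item Every instance of \Cref{lem:2OutNeighbors} gives a sum of at most $13$, so nothing proved so far excludes this configuration. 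In particular, \Cref{lem:2OutNeighbors} applied to $u$ and $v$ buys you nothing here.
\item A similar failure occurs for $|X|=6$: with $N^+(u)=\{a,b\}$, $N^+(v)=\{b,c\}$ and arcs $a\to x$, $b\to x$, $c\to x$, $a\to c$, $c\to u$, only $14<15$ arcs are deleted.
\end{itemize}
So further reductions are needed, and your fallback (``add a degree-$4$ vertex of $N^+(u)$ instead'') does not identify them.

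The missing idea in the paper is to work on the other side of $u$ and $v$. Let $y$ and $z$ be the second in-neighbours of $u$ and $v$. Deleting $\{x,u,v,y,z\}$ makes $u$ and $v$ sources, so one only needs $17$ deleted arcs to apply $(5,17,2)$. Beyond $xu,xv,yu,zv$, the only possible internal arcs are $yx$, $zx$, an arc between $y$ and $z$, $uz$ and $vy$. The paper removes these obstacles one at a time with small reductions:
\begin{itemize}
\item $y\neq z$: delete $x,u,v,y$ and add $u,v$.
\item $\degin(y)=\degin(z)=2$, and $yx\in E$ forces $\degout(y)=3$: delete $x,u,v,y$ and add $x,y$, so $x$ becomes a sink.
\item No arcs $yz$, $zy$, $uz$, $vy$: use the triples $(5,8,3)$ and $(6,15,3)$, adding $u,v,z$.
\end{itemize}
After this, the degree sum is $13+\deg(y)+\deg(z)$. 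Each of $y,z$ that points to $x$ contributes one extra internal arc but then has degree $5$ rather than $4$, so at least $13+4+4-4=17$ arcs are deleted.

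Your out-neighbourhood reduction gains one more vertex ($\gamma=3$). The price is four uncontrolled vertices whose mutual arcs and arcs back into $x$ you cannot bound. The in-neighbour version leaves only two such vertices, which is what keeps the case analysis finite and short.
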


\begin{proof}
    Let $x \in V$ be a vertex of degree 5. Without loss of generality, assume that $\degout(x) = 2$. Let $u,v$ be the two out-neighbors of $x$.

    \begin{claim}\label{claim:degree5claim1}
        $\deg(u) = \deg(v) = 4$ and $u$ and $v$ are not adjacent.
    \end{claim}

    \begin{claimproof}[Proof of \Cref{claim:degree5claim1}]
        By \Cref{lem:2OutNeighbors} applied to $x$ (of degree 5), we have $\deg(u) + \deg(v) \le 9$ and if $\deg(u) + \deg(v) = 9$ then $u$ and $v$ are adjacent. It suffices therefore to show that $u$ and $v$ are not adjacent and we will deduce that $\deg(u) = \deg(v) = 4$. Without loss of generality, assume that $uv \in E$.
        
        If $\degout(u) = 2$ then let $w$ be the second out-neighbor of $u$ and by removing $x,u,v,w$ we delete at least 10 arcs. Moreover, for every acyclic set $A'$ of $G' = G-x-u-v-w$, $A = A' + u + x$ is an acyclic set in $G$. By applying \Cref{prop:reduction} with the triple $(4, 10, 2)$, we find that $G$ is not a minimal counterexample to \Cref{thm:main} (see \Cref{subfig:degree5claim1a}).
        
        Therefore $\degout(u) \ge 3$ and since $\deg(u) + \deg(v) \le 9$ this implies that $\deg(u) \ge 5$ and $\deg(v) = 4$. In particular $\degin(u) = 2$. Let $w$ be the second in-neighbor of $u$. By removing $x,u,v,w$ we delete at least 10 arcs. Moreover, for every acyclic set $A'$ of $G' = G-x-u-v-w$, $A = A' + u + x$ is an acyclic set in $G$. By applying \Cref{prop:reduction} with the triple $(4, 10, 2)$, we find that $G$ is not a minimal counterexample to \Cref{thm:main} (see \Cref{subfig:degree5claim1b}).
    \end{claimproof}

    \begin{figure}[ht]
        \centering
        \begin{subfigure}[ht]{0.45\textwidth}
            \centering
            \includegraphics[scale = 0.6]{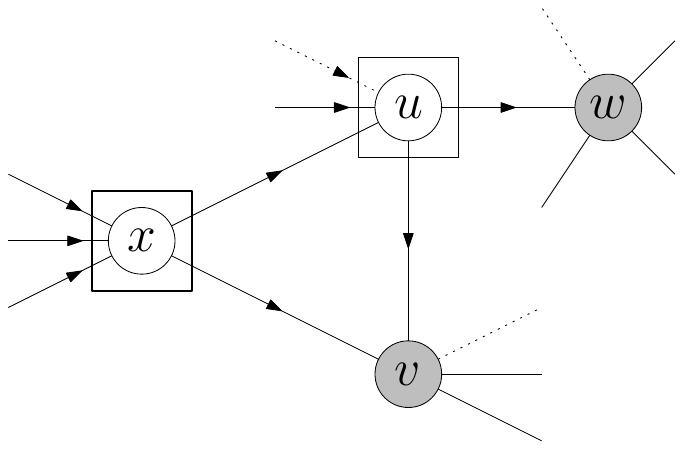}
            \caption{Case where $\degout(u) = 2$.}\label{subfig:degree5claim1a}
        \end{subfigure}
        \begin{subfigure}[ht]{0.45\textwidth}
            \centering
            \includegraphics[scale = 0.6]{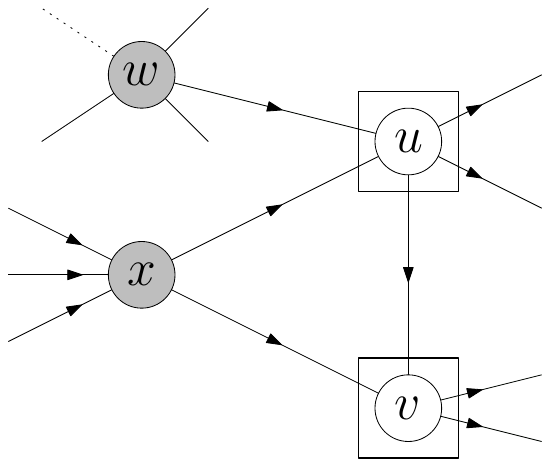}
            \caption{Case where $\degin(u) = 2$.}\label{subfig:degree5claim1b}
        \end{subfigure}
        \caption{Illustration of the proof of \Cref{claim:degree5claim1}.}\label{fig:degree5claim1}
    \end{figure}

    Let $y$ (respectively $z$) be the other in-neighbor of $u$ (respectively $v$).

    \begin{claim}\label{claim:degree5claim2}
        $y \neq z$, $\degin(y) = 2$ and $\degin(z) = 2$. Moreover, if $yx \in E$ (respectively $zx \in E$) then $\degout(y) = 3$ (respectively $\degout(z) = 3$).
    \end{claim}

    \begin{claimproof}[Proof of \Cref{claim:degree5claim2}]
        If $y = z$ then by removing $x,u,v,y$ we delete at least 10 arcs. Moreover, for every acyclic set $A'$ of $G' = G-x-u-v-y$, $A = A' + u + v$ is an acyclic set in $G$. By applying \Cref{prop:reduction} with the triple $(4, 10, 2)$, we find that $G$ is not a minimal counterexample to \Cref{thm:main} (see \Cref{subfig:degree5claim2a}). Therefore $y \neq z$.

        Suppose that $\degin(y) = 3$ so $\deg(y) \ge 5$. Then by \Cref{lem:2OutNeighbors} applied to $u$ we have $\deg(y) = 5$ and $yx \in E$. By removing $x,u,v,y$ we delete at least 10 arcs. Moreover, for every acyclic set $A'$ of $G' = G-x-u-v-y$, $A = A' + x + y$ is an acyclic set in $G$. By applying \Cref{prop:reduction} with the triple $(4, 10, 2)$, we find that $G$ is not a minimal counterexample to \Cref{thm:main} (see \Cref{subfig:degree5claim2b}). Therefore $\degin(y) = 2$ and similarly $\degin(z) = 2$.

        Suppose that $yx \in E$ and $\degout(y)=2$. By removing $x,u,v,y$ we delete at least 10 arcs. Moreover, for every acyclic set $A'$ of $G' = G-x-u-v-y$, $A = A' + x + y$ is an acyclic set in $G$. By applying \Cref{prop:reduction} with the triple $(4, 10, 2)$, we find that $G$ is not a minimal counterexample to \Cref{thm:main} (see \Cref{subfig:degree5claim2c}). Therefore $yx \in E \Longrightarrow \degout(y) = 3$. Similarly, $zx \in E \Longrightarrow \degout(z) = 3$.
    \end{claimproof}

    \begin{figure}[ht]
        \centering
        \begin{subfigure}[ht]{0.32\textwidth}
            \centering
            \includegraphics[scale = 0.5]{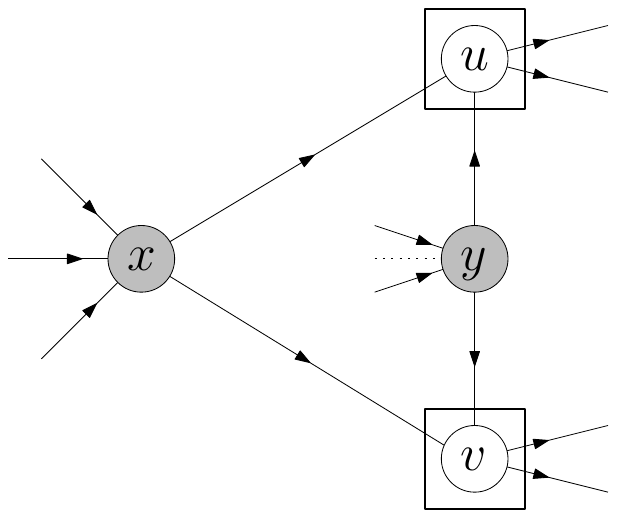}
            \caption{Case where $y=z$.}\label{subfig:degree5claim2a}
        \end{subfigure}
        \begin{subfigure}[ht]{0.32\textwidth}
            \centering
            \includegraphics[scale = 0.5]{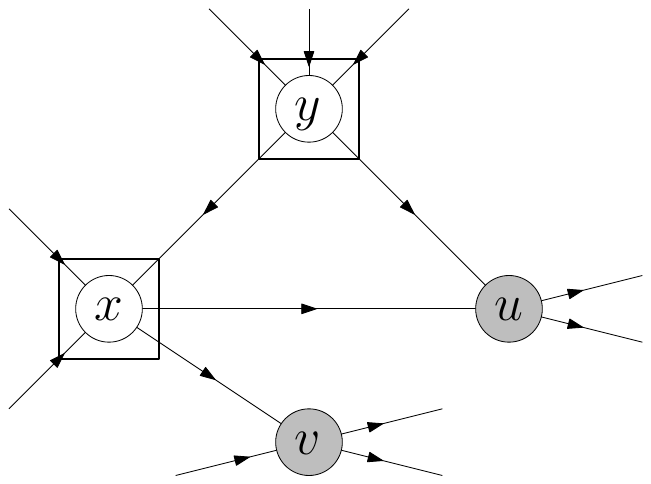}
            \caption{Case where $\degin(y) = 3$.}\label{subfig:degree5claim2b}
        \end{subfigure}
        \begin{subfigure}[ht]{0.33\textwidth}
            \centering
            \includegraphics[scale = 0.5]{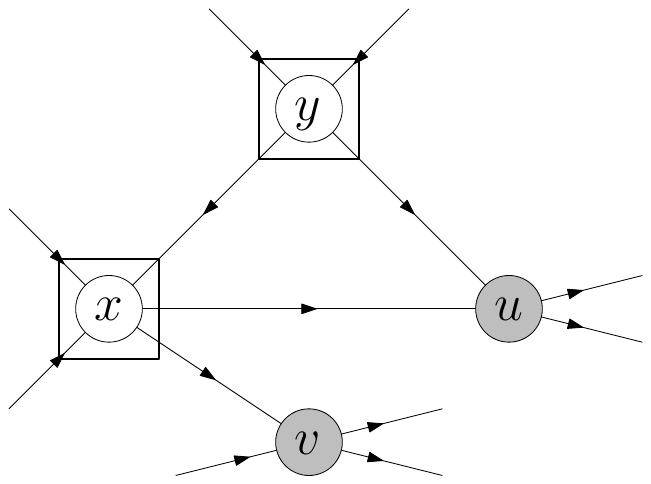}
            \caption{Case where $yx \in E$ and $\degout(y) = 2$.}\label{subfig:degree5claim2c}
        \end{subfigure}
        \caption{Illustration of the proof of \Cref{claim:degree5claim2}.}\label{fig:degree5claim2}
    \end{figure}

    \begin{claim}\label{claim:degree5claim3}
        $y$ and $z$ are not adjacent and moreover $uz, vy \notin E$.
    \end{claim}

    \begin{claimproof}[Proof of \Cref{claim:degree5claim3}]
        Suppose that $yz \in E$ and $uz \in E$. By removing $x,u,v,y,z$ we delete at least 8 arcs. Moreover, for every acyclic set $A'$ of $G' = G-x-u-v-y-z$, $A = A' + u + v + z$ is an acyclic set in $G$. By applying \Cref{prop:reduction} with the triple $(5, 8, 3)$, we find that $G$ is not a minimal counterexample to \Cref{thm:main} (see \Cref{subfig:degree5claim3a}).

        Suppose that $yz \in E$ and $uz \notin E$. Let $w$ be the second in-neighbor of $z$. From what precedes, $w$ is a vertex distinct from all the other vertices already cited. By removing $x,u,v,y,z,w$ we delete at least 15 arcs. Indeed we have 6 arcs among $x,u,v,y,z,w$, 3 entering $x$, 2 leaving $u$ and 2 leaving $v$ (since $ux, vx \notin E$), one of the two entering $y$ (we can have $vy \in E$ but $uy \notin E$) as well as one leaving $z$ (we have $zy \notin E$ and if $zx \in E$ then $\degout(z) = 3$ by the previous claim). Moreover, for every acyclic set $A'$ of $G' = G-x-u-v-y-z-w$, $A = A' + u + v + z$ is an acyclic set in $G$. By applying \Cref{prop:reduction} with the triple $(6, 15, 3)$, we find that $G$ is not a minimal counterexample to \Cref{thm:main} (see \Cref{subfig:degree5claim3b}). Therefore $yz \notin E$ and similarly $zy \notin E$.

        Suppose that $uz \in E$. Let $w$ be the second in-neighbor of $z$. From what precedes, $w$ is a vertex distinct from all the other vertices already cited. By removing $x,u,v,y,z,w$ we delete at least 15 arcs. Indeed we have 6 arcs among $x,u,v,y,z,w$, 3 entering $x$, 1 leaving $u$ and 2 leaving $v$ (since $ux, vx \notin E$), one of the two entering $y$ (we can have $vy \in E$ but $uy, zy \notin E$) as well as one leaving $y$ and $z$ (we have $yz \notin E$ (respectively $zy \notin E$) and if $yx \in E$ (respectively $zx \in E$) then $\degout(y) = 3$ (respectively $\degout(z) = 3$) by the previous claim). Moreover, for every acyclic set $A'$ of $G' = G-x-u-v-y-z-w$, $A = A' + u + v + z$ is an acyclic set in $G$. By applying \Cref{prop:reduction} with the triple $(6, 15, 3)$, we find that $G$ is not a minimal counterexample to \Cref{thm:main} (see \Cref{subfig:degree5claim3c}). Therefore $uz \notin E$ and similarly $vy \notin E$.
    \end{claimproof}

    \begin{figure}[ht]
        \centering
        \begin{subfigure}[ht]{0.32\textwidth}
            \centering
            \includegraphics[scale = 0.45]{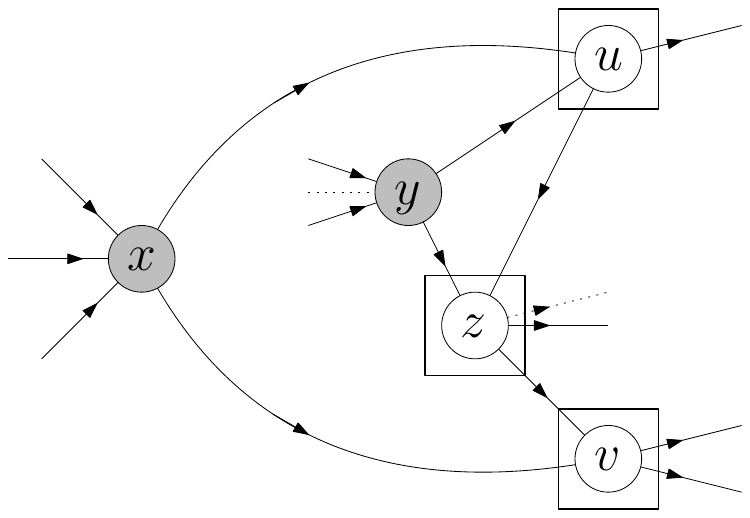}
            \caption{Case where $yz, uz \in E$.}\label{subfig:degree5claim3a}
        \end{subfigure}
        \begin{subfigure}[ht]{0.32\textwidth}
            \centering
            \includegraphics[scale = 0.45]{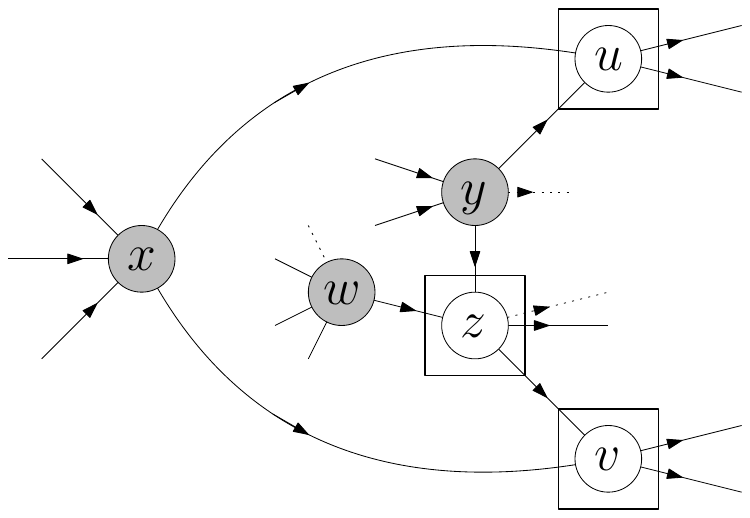}
            \caption{Case where $yz \in E$ and $uz \notin E$.}\label{subfig:degree5claim3b}
        \end{subfigure}
        \begin{subfigure}[ht]{0.33\textwidth}
            \centering
            \includegraphics[scale = 0.45]{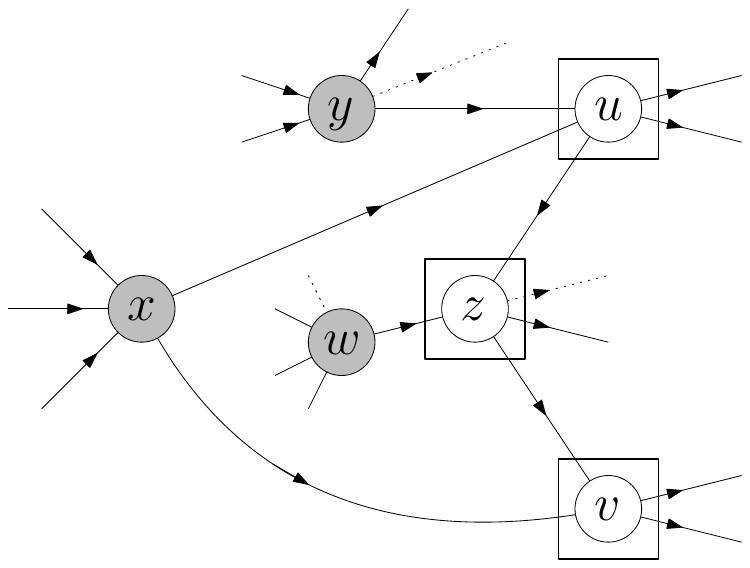}
            \caption{Case where $uz \in E$.}\label{subfig:degree5claim3c}
        \end{subfigure}
        \caption{Illustration of the proof of \Cref{claim:degree5claim3}.}\label{fig:degree5claim3}
    \end{figure}

    From the previous claims we know that $yz, zy, uz, vy \notin E$ and that if $yx \in E$ (respectively $zx \in E$) then $\degout(y) = 3$ (respectively $\degout(z) = 3$). Thus by removing $x,u,v,y,z$ we delete at least 17 arcs. Moreover, for every acyclic set $A'$ of $G' = G-x-u-v-y-z$, $A = A' + u + v$ is an acyclic set in $G$. By applying \Cref{prop:reduction} with the triple $(5, 17, 2)$, we find that $G$ is not a minimal counterexample to \Cref{thm:main} (see \Cref{fig:degree5}).
\end{proof}

\begin{figure}[ht]
    \centering
    \includegraphics[scale = 0.6]{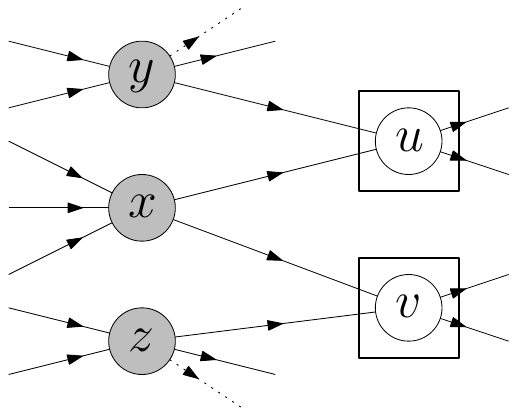}
    \caption{Illustration of the end of the proof of Lemma~\ref{lem:degree5}.}\label{fig:degree5}
\end{figure}

\subsection{Elimination of degree 6 vertices}

\begin{lemma}\label{lem:degree6balanced}
    Vertices of degree 6 have out-degree and in-degree equal to 3.
\end{lemma}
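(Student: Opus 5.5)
Suppose, for a contradiction, that $x$ is a vertex of degree 6 whose in-degree and out-degree are not both 3. By \Cref{lem:degree3}, both are at least 2, so either $\degout(x)=2$ or $\degin(x)=2$. By reversing all arcs, which preserves the statement of \Cref{thm:main}, it suffices to treat $\degout(x)=2$ (and so $\degin(x)=4$). Let $u,v$ be the two out-neighbours of $x$. The plan mirrors \Cref{claim:degree5claim1}: first pin down the degrees of $u$ and $v$ using \Cref{lem:2OutNeighbors}, then apply \Cref{prop:reduction} to a four-vertex configuration.

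\textbf{Step 1 (degrees).} By \Cref{lem:2OutNeighbors}, $\deg(u)+\deg(v)\le 14-6=8$. By \Cref{lem:degree1,lem:degree2,lem:degree3,lem:degree5,lem:degree7}, every vertex of $G$ has degree $4$ or $6$. Hence $\deg(u)=\deg(v)=4$, so the sum equals $14$. The equality case of \Cref{lem:2OutNeighbors} then forces $u$ and $v$ to be adjacent; without loss of generality $uv\in E$. Since $\deg(u)=4$ and both degrees of $u$ are at least 2, we get $\degout(u)=2$. Let $w$ be the out-neighbour of $u$ other than $v$. Then $w\neq x$, because $xu\in E$ and $G$ has no digons.

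\textbf{Step 2 (reduction).} Remove $X=\{x,u,v,w\}$ and use the triple $(4,10,2)$ from \Cref{tab:triples}.
\begin{itemize}
\item \emph{Arc count.} The degree sum over $X$ is at least $6+4+4+4=18$. The arcs inside $X$ are among $xu$, $xv$, $uv$, $uw$, one arc between $v$ and $w$, and one arc between $w$ and $x$, so there are at most $6$ of them. Hence at least $18-6=12\ge 10$ arcs are deleted.
\item \emph{Acyclicity.} Take any acyclic set $A'$ of $G-X$ and set $A=A'\cup\{x,u\}$. A directed cycle in $G[A]$ containing $x$ must leave $x$ through $u$, since $v\notin A$. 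A directed cycle containing $u$ must leave $u$ through $v$ or $w$, and neither lies in $A$. So no directed cycle of $G[A]$ uses $x$ or $u$, and $A$ is acyclic with $|A|=|A'|+2$.
\end{itemize}
\Cref{prop:reduction} then shows that $G$ is not a minimal counterexample, a contradiction. The case $\degin(x)=2$ is identical with all arcs reversed.

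\textbf{Main obstacle.} The delicate point is the arc count: possible extra adjacencies among $x,u,v,w$, such as $wx$ (since $x$ has four in-neighbours) or an arc between $v$ and $w$, reduce the number of deleted arcs. Bounding the internal arcs by $6$ leaves slack of $2$ over the required $\beta=10$, so no further case analysis should be needed. I also need to check that all the degree facts are applicable, in particular that \Cref{lem:hNul} holds so that \Cref{prop:reduction} can be used.
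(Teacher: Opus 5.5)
Your proposal is correct and follows essentially the paper's argument: \Cref{lem:2OutNeighbors} forces $\deg(u)=\deg(v)=4$ with $u,v$ adjacent, and then \Cref{prop:reduction} with the triple $(4,10,2)$ is applied to a four-vertex set. The only difference is the choice of the fourth vertex. You take the second out-neighbour $w$ of $u$ and add $\{x,u\}$, making $u$ a sink, whereas the paper takes the second in-neighbour $y$ of $u$ and adds $\{u,v\}$, using that the in-neighbours of $v$ are exactly $x$ and $u$; both choices work equally well.
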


\begin{proof}
    Let $x$ be a vertex of degree 6 such that $\degout(x) = 2$. Let $u$ and $v$ be its two out-neighbors. By \Cref{lem:2OutNeighbors} applied to $x$, we must have $\deg(u) = \deg(v) = 4$ and $u$ and $v$ adjacent. Without loss of generality, assume $uv \in E$. Let $y$ be the second in-neighbor of $u$. By removing $x,u,v,y$ we lose at least 10 arcs. Moreover, for every acyclic set $A'$ of $G' = G-x-u-v-y$, $A = A' + u + v$ is an acyclic set in $G$. By applying \Cref{prop:reduction} with the triple $(4, 10, 2)$, we find that $G$ is not a minimal counterexample to \Cref{thm:main} (see \Cref{fig:degree6balanced}).
\end{proof}

\begin{figure}[ht]
    \centering
    \includegraphics[scale = 0.6]{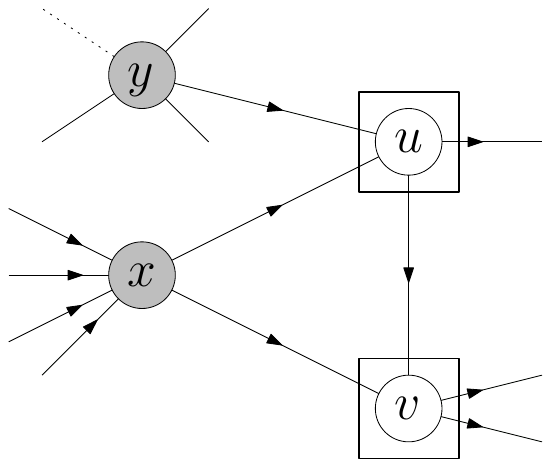}
    \caption{Illustration of the proof of Lemma~\ref{lem:degree6balanced}.}\label{fig:degree6balanced}        
\end{figure}

\begin{lemma}\label{lem:6arc4}
    There is no arc between a vertex of degree 6 and a vertex of degree 4.
\end{lemma}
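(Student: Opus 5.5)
We argue by contradiction, using only the local structure already forced on $G$. By \Cref{lem:degree7}, \Cref{lem:degree3} and \Cref{lem:degree5}, every vertex of $G$ has degree $4$ or $6$. By \Cref{lem:degree3} and \Cref{lem:degree6balanced}, degree-4 vertices have in- and out-degree $2$, and degree-6 vertices have in- and out-degree $3$.

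Suppose there is an arc between a vertex $x$ of degree 6 and a vertex $u$ of degree 4. Reversing all arcs preserves the statement of \Cref{thm:main} and all previous lemmas, so we may assume $xu \in E$.

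\textbf{Forcing the neighbourhood of $u$.} Let $w$ be the second in-neighbor of $u$, and let $a,b$ be its two out-neighbors. Since $G$ is oriented, $a,b \notin \{x,w\}$. Apply \Cref{lem:2OutNeighbors} (in-neighbor version) to $u$ with in-neighbors $x,w$. This gives $4+6+\deg(w) \le 14$, so $\deg(w)=4$. The sum is then exactly $14$, so the same lemma forces $x$ and $w$ to be adjacent. We split according to the orientation of this arc.

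\textbf{Case $xw \in E$.} The in-neighbors of $w$ are $x$ and some vertex $t$, with $t \notin \{u,x,w\}$ (note $t \ne u$ since $wu \in E$). Set $X=\{x,u,w,t\}$ and, for an acyclic set $A'$ of $G-X$, put $A=A'+u+w$.
\begin{itemize}
\item Both in-neighbors of $w$ lie in $X\setminus A$, so $w$ is a source in $G[A]$.
\item The in-neighbors of $u$ are $x$ and $w$, and $w$ is a source, so no directed cycle of $G[A]$ can pass through $u$ or $w$. Hence $A$ is acyclic.
\item The degree sum of $X$ is $6+4+4+4=18$, and at most $\binom{4}{2}=6$ arcs lie inside $X$. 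So removing $X$ deletes at least $12 \ge 10$ arcs.
\end{itemize}
\Cref{prop:reduction} with the triple $(4,10,2)$ then gives a contradiction.

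\textbf{Case $wx \in E$.} Now the out-neighbors of $w$ are exactly $u$ and $x$. Set $X=\{x,u,a,b\}$ and put $A=A'+u+w$.
\begin{itemize}
\item All out-neighbors of $u$ lie in $X$ but not in $A$, so $u$ is a sink in $G[A]$.
\item The out-neighbors of $w$ are $x \notin A$ and the sink $u$, so $w$ lies on no directed cycle of $G[A]$. Hence $A$ is acyclic.
\item We have $w \notin X$, so indeed $A$ gains two new vertices beyond $A'$.
\item Since $\deg(a),\deg(b)\ge 4$, the degree sum of $X$ is at least $6+4+4+4=18$. At most $6$ arcs lie inside $X$, so at least $12 \ge 10$ arcs are deleted.
\end{itemize}
Again \Cref{prop:reduction} with $(4,10,2)$ gives a contradiction.

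\textbf{Main obstacle.} The hard step is choosing the right removal set in the case $wx\in E$. Removing the in-side $\{x,u,w,\dots\}$ fails there, because $w$ keeps a second in-neighbor that may survive in $A'$, so a cycle through $w$ and $u$ cannot be excluded. The fix is to remove the out-neighbors of $u$ instead, turning $u$ into a sink. The adjacency of $x$ and $w$, supplied by the equality case of \Cref{lem:2OutNeighbors}, is exactly what makes $w$ safe to add to $A$ as well.
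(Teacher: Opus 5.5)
Your case $xw\in E$ is correct; it is exactly the paper's \Cref{claim:6arc4claim} with all arcs reversed. The gap is in the case $wx\in E$. There you delete $X=\{x,u,a,b\}$ but add both $u$ and $w$ to $A'$. Since $w\notin X$, $w$ is a vertex of $G-X$. \Cref{prop:reduction} demands the gain for \emph{every} acyclic set $A'$ of $G-X$, including those that already contain $w$. For such $A'$ you get $A=A'+u$, a gain of only one vertex, so the triple $(4,10,2)$ does not apply. Your sentence ``$w\notin X$, so $A$ gains two new vertices'' has the logic reversed: a guaranteed gain of two requires both added vertices to lie in $X$.

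There is no easy local repair. Once $x$ is deleted, $u$ and $w$ lie on paths $s\to w\to u\to a$, where $s$ is an in-neighbor of $w$ and $a$ an out-neighbor of $u$. To add both safely you must also delete both in-neighbors of $w$ or both out-neighbors of $u$. That means five deleted vertices and the triple $(5,17,2)$. The degree sum of $X$ is then $22$ or $24$ by \Cref{lem:2OutNeighbors}, with at least five arcs inside $X$, so seventeen deleted arcs are not guaranteed without further case analysis.

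The configuration you are left with is $\deg(u)=\deg(w)=4$ with $w\to x$, $w\to u$, $x\to u$. This is precisely what the paper's claim produces, and the paper does \emph{not} eliminate it locally. The missing idea is a global step. By the claim, every degree-4 in-neighbor $a$ of $x$ has a degree-4 partner $a'$ with $xa',aa'\in E$. The paper then splits on how many of the three in-neighbors of $x$ have degree 4. It deletes $x$, all its in-neighbors and their partners, and adds $x$ and the partners to $A'$, using the triples $(7,13,4)$, $(6,15,3)$ and $(5,17,2)$. In the last case, with the other in-neighbors $b,c$ of degree 6, the seventeen arcs come from applying \Cref{lem:2OutNeighbors} to $a$ and to $a'$: this shows that $ba,ca$ are not both arcs and $a'b,a'c$ are not both arcs.
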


\begin{proof}
    Begin the proof with the following claim that will be used several times in the rest of the proof of \Cref{lem:6arc4}.
    \begin{claim}\label{claim:6arc4claim}
        Let $x$ be a vertex of degree 6 and $a$ a vertex of degree 4. If $a$ is an in-neighbor (respectively out-neighbor) of $x$ then there exists a vertex $a'$ of degree 4 that is an out-neighbor (respectively in-neighbor) of both $x$ and $a$.
    \end{claim}

    \begin{claimproof}[Proof of \Cref{claim:6arc4claim}]
        Let $a'$ be the second out-neighbor of $a$. By \Cref{lem:2OutNeighbors} applied to $a$, we have $\deg(a') = 4$ and $a'$ is adjacent to $x$. Suppose that $a' x \in E$. Let $y$ be the second out-neighbor of $a'$. By removing $x,a,a',y$ we lose at least 10 arcs. Moreover, for every acyclic set $A'$ of $G' = G-x-a-a'-y$, $A = A' + a + a'$ is an acyclic set in $G$. By applying \Cref{prop:reduction} with the triple $(4, 10, 2)$, we find that $G$ is not a minimal counterexample to \Cref{thm:main} (see \Cref{fig:liaison_degre6_degre4}). Therefore $x a' \in E$ and $a'$ is an out-neighbor of both $x$ and $a$.
    \end{claimproof}

    \begin{figure}[ht]
        \centering
        \includegraphics[scale = 0.6]{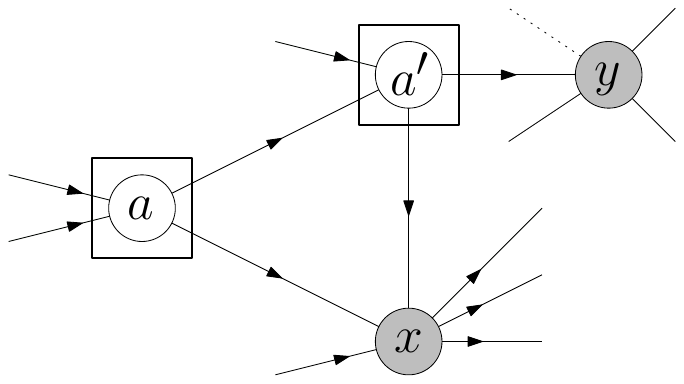}
        \caption{Illustration of the proof of \Cref{claim:6arc4claim}.}\label{fig:liaison_degre6_degre4}
    \end{figure}

    Let $x$ be a vertex of degree 6. By \Cref{lem:degree6balanced}, $\degout(x) = \degin(x) = 3$. Let $a, b, c$ be the three in-neighbors of $x$. By \Cref{lem:degree5}, $a,b,c$ have degree 4 or degree 6. We consider cases based on how many of these neighbors have degree equal to 4.

    If $\deg(a) = \deg(b) = \deg(c) = 4$ then by \Cref{claim:6arc4claim}, $x$ has three out-neighbors $a', b', c'$ of degree 4 such that $aa', bb', cc' \in E$. By removing $x,a,b,c,a',b',c'$ we lose at least 13 arcs. Moreover, for every acyclic set $A'$ of $G' = G-x-a-b-c-a'-b'-c'$, $A = A' + x + a' + b' + c'$ is an acyclic set in $G$. By applying \Cref{prop:reduction} with the triple $(7, 13, 4)$, we find that $G$ is not a minimal counterexample to \Cref{thm:main} (see \Cref{subfig:degree6a}).

    If $\deg(a) = \deg(b) = 4$ and $\deg(c) = 6$ then by \Cref{claim:6arc4claim}, $x$ has two out-neighbors $a', b'$ of degree 4 such that $aa', bb' \in E$. By removing $x,a,b,c,a',b'$ we lose at least 15 arcs. Indeed there are 7 arcs among the vertices mentioned, 7 half-arcs entering $a,b,c$ as well as 1 half-arc leaving $x$ which cannot be connected to any entering half-arc in the structure. Moreover, for every acyclic set $A'$ of $G' = G-x-a-b-c-a'-b'$, $A = A' + x + a' + b'$ is an acyclic set in $G$. By applying \Cref{prop:reduction} with the triple $(6, 15, 3)$, we find that $G$ is not a minimal counterexample to \Cref{thm:main} (see \Cref{subfig:degree6b}).

    If $\deg(a) = 4$ and $\deg(b) = \deg(c) = 6$ then by \Cref{claim:6arc4claim}, $x$ has an out-neighbor $a'$ of degree 4 such that $aa' \in E$. By removing $x,a,b,c,a'$ we lose at least 17 arcs. Indeed, there are 5 arcs among the cited vertices, 8 incoming half-arcs of $a,b,c$, as well as 2 outgoing half-arcs of $x$ that cannot be matched to any incoming half-arc of the structure. Moreover, by \Cref{lem:2OutNeighbors} applied to $a$, we cannot have both $ba \in E$ and $ca \in E$, so at least one outgoing half-arc of $b$ or $c$ is not matched to an incoming half-arc of the structure. Similarly, applying \Cref{lem:2OutNeighbors} to $a'$, we show that we cannot have both $a'b \in E$ and $a'c \in E$, so at least one outgoing half-arc of $a'$ is not matched to an incoming half-arc of the structure. Altogether, we thus remove at least 17 arcs. Moreover, for every acyclic set $A'$ of $G' = G-x-a-b-c-a'$, $A = A' + x + a'$ is an acyclic set in $G$. By applying \Cref{prop:reduction} with the triple $(5, 17, 2)$, we find that $G$ is not a minimal counterexample to \Cref{thm:main} (see \Cref{subfig:degree6c}).
    
    Thus $x$ has no in-neighbor of degree 4 and by symmetry it has no out-neighbor of degree 4.
\end{proof}

\begin{figure}[ht]
    \centering
    \begin{subfigure}[ht]{0.32\textwidth}
        \centering
        \includegraphics[scale = 0.5]{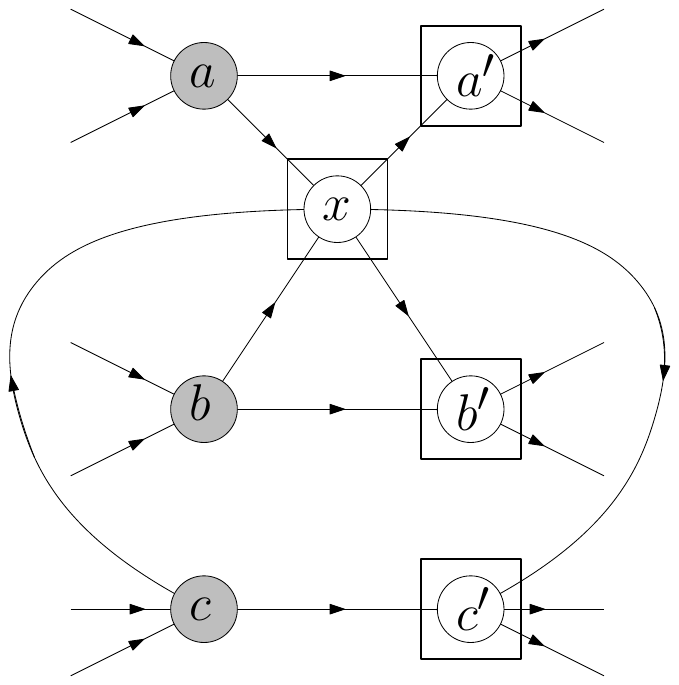}
        \caption{Case with three 4-in-neighbors.}\label{subfig:degree6a}
    \end{subfigure}
    \begin{subfigure}[ht]{0.32\textwidth}
        \centering
        \includegraphics[scale = 0.5]{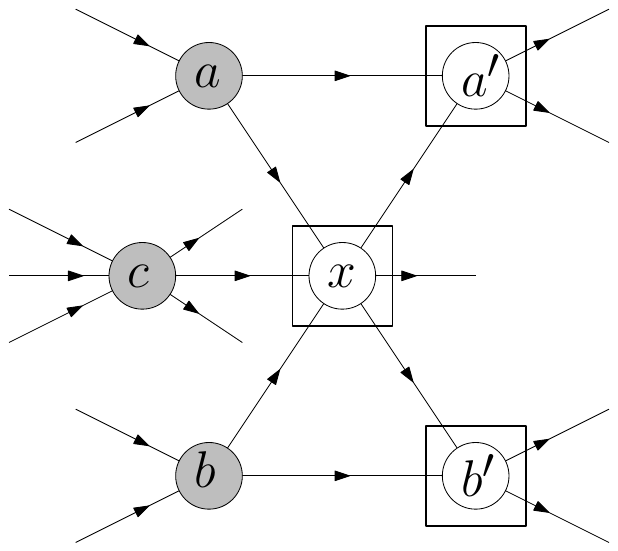}
        \caption{Case with two 4-in-neighbors.}\label{subfig:degree6b}
    \end{subfigure}
    \begin{subfigure}[ht]{0.32\textwidth}
        \centering
        \includegraphics[scale = 0.5]{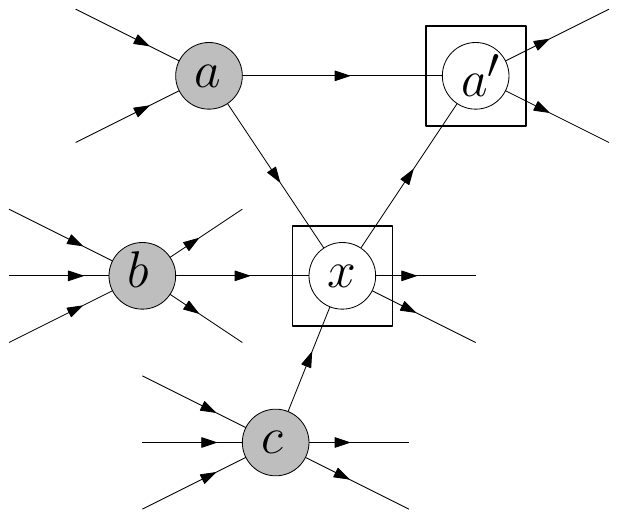}
        \caption{Case with one 4-in-neighbor.}\label{subfig:degree6c}
    \end{subfigure}
    \caption{Illustration of the proof of \Cref{lem:6arc4}.}
\end{figure}

\begin{lemma}\label{lem:3OutNeighbors}
    Let $x$ be a vertex of degree 6. The three in-neighbors (respectively out-neighbors) of $x$ form an oriented triangle.
\end{lemma}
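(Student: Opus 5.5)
Consider a vertex $x$ of degree 6 with in-neighbors $a,b,c$; the out-neighbor case follows by reversing all arcs. By \Cref{lem:degree7}, \Cref{lem:degree5}, \Cref{lem:degree3} and \Cref{lem:6arc4}, each of $a,b,c$ has degree exactly 6. By \Cref{lem:degree6balanced}, each of them has in-degree and out-degree 3. We must show that $a,b,c$ are pairwise adjacent. Since $G$ is oriented, the three arcs among them then form either a directed triangle or a transitive triangle. The plan is to first prove pairwise adjacency, and then to exclude the transitive case if the statement is meant to be read as ``directed triangle.'' Every step applies \Cref{prop:reduction} to a set $X$ containing $x$ and some of $a,b,c$, using triples from \Cref{tab:triples}.

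\textbf{Step 1: adjacency.} Suppose first that $a,b,c$ induce no arc. Take $X=\{x,a,b,c\}$ and add $a,b,c$ back to the acyclic set, which is valid because $\{a,b,c\}$ is independent. This needs the triple $(4,10,2)$. The arcs deleted are at most the 3 arcs into $x$, the 3 out-arcs of $x$, and the arcs at $a,b,c$. Any arc between $x$ and its in-neighbors is already counted once. A counting argument gives at least $3+3+3\cdot 5=21$ deleted arcs, since each of $a,b,c$ has 5 arcs other than its arc to $x$ and there are no arcs among them. This is at least $10$, so \Cref{prop:reduction} gives a contradiction.

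Now suppose exactly one pair is adjacent, say $ab\in E$, with $c$ adjacent to neither $a$ nor $b$. Any two of the three are then non-adjacent, so we take $X=\{x,a,b,c\}$ again and add, for instance, $a$ and $c$, which are non-adjacent; the deleted-arc count is again far above 10. This shows that every pair must be adjacent.

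Hence $a,b,c$ span three arcs.

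\textbf{Step 2: excluding the transitive triangle.} Suppose the triangle is transitive, say $ab,bc,ac\in E$. Then $\{a,b,c\}$ induces an acyclic graph, and we try to put all three into $A$ together with something else. The natural choice is $X=\{x,a,b,c\}$ with $\gamma=2$, or a larger $X$ with $\gamma=3$. Adding $a,b,c$ as a block is valid exactly when no directed path in $G'\cup\{a,b,c\}$ closes a cycle through them. This is hard to control, so instead we add $x$ together with two of the three. The pair $\{x,c\}$ is acyclic, and $x,a$ fails only if $x\to a$, which cannot happen since $a$ is an in-neighbor. So we take $A=A'+x+a$. A cycle in $A'+x+a$ would have to use $x$ and $a$ together with a path from $x$ back to $a$ through $A'$, and such cycles cannot be excluded in general. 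This is why most reductions in the paper add vertices whose remaining neighbors are all deleted, for example $x$ after deleting all its out-neighbors.

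I therefore expect the right reduction to be $X=\{x,a,b,c\}\cup N^+(x)$, which has $\alpha=7$ and so uses $(7,22,3)$. We add $x$ (its out-neighbors are deleted, so it is a sink in $A$) and the source $a$ of the transitive triangle, whose out-neighbors $b$, $c$, $x$ are all deleted, so $a$ is a sink as well. A third vertex still has to be found; a natural candidate is an out-neighbor of $x$ whose out-arcs all lead into $X$.

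This counting and selection is the main obstacle. I will count the deleted arcs as $7\cdot 6$ half-arcs minus twice the number of internal arcs, which must be at least $22$, that is, at most 10 internal arcs. When the internal arcs are dense, the structure is close to $P^7$ and we get extra acyclic choices instead. The case analysis on the number of arcs inside $N^+(x)\cup N^-(x)$ is where I expect the real work to lie, together with checking that $P^7$ itself is excluded by \Cref{lem:hNul}.
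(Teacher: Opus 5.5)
Both steps contain a genuine gap. In Step 1 you claim that $A'+a+b+c$ (and, in the one-arc case, $A'+a+c$) is acyclic because the added vertices are pairwise non-adjacent. This is false. The vertex $a$ keeps in- and out-neighbors in $G-X$, so a directed cycle $a\to w\to\cdots\to u\to a$ may run through $A'$; non-adjacency only rules out cycles inside the added set. You also add three vertices under a triple with $\gamma=2$, and you skip the case of exactly two arcs among $a,b,c$. The missing idea is the principle you state yourself later: add back only vertices all of whose in-neighbors, or all of whose out-neighbors, are deleted. With $X=\{x,a,b,c\}$, every in-neighbor of $x$ lies in $X$, so $A'+x$ is acyclic. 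The four vertices have degree $6$ and span at most $3+2$ arcs when $a,b,c$ do not form a triangle, so at least $24-5=19$ arcs are deleted. Then \Cref{prop:reduction} applies with $(4,19,1)$, which satisfies $7\alpha-\beta=9\gamma$ even though it is not listed in \Cref{tab:triples}.

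Step 2 is unfinished. You never exhibit the third vertex to add back, and nothing in the configuration guarantees one, since $b$, $c$ and the out-neighbors of $x$ may all keep both in- and out-neighbors in $G-X$. Your arc count is also off: the number of deleted arcs is $\sum_{v\in X}\deg(v)$ minus the number of arcs inside $X$, not minus twice that number. So you would need at most $20$ internal arcs, not $10$; the count is not the real obstacle. A five-vertex reduction suffices. With your labelling $ab,bc,ac\in E$, the sink $c$ of the transitive triangle has in-neighbors $a$, $b$ and a third one $y\notin\{x,a,b\}$. Take $X=\{x,a,b,c,y\}$ and $A=A'+x+c$. All in-neighbors of $c$ are deleted, so $c$ is a source of $A$, and $c$ is the only in-neighbor of $x$ in $A$, so $A$ is acyclic. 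Since $\deg(y)\ge 4$, the total degree of $X$ is at least $28$ and $X$ spans at most $10$ arcs. Hence at least $18\ge 17$ arcs are deleted, and the triple $(5,17,2)$ gives the contradiction. These two reductions are exactly the paper's proof, up to relabelling.
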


\begin{proof}
    Let $a,b,c$ be the three in-neighbors of $x$. Suppose they do not form a triangle (oriented or non-oriented). Then removing $x,a,b,c$ removes at least 19 arcs. Moreover, for every acyclic set $A'$ of $G' = G-x-a-b-c$, $A = A' + x$ is an acyclic set of $G$. By applying \Cref{prop:reduction} with the triple $(4, 19, 1)$, we find that $G$ is not a minimal counterexample to \Cref{thm:main} (see \Cref{subfig:degree6trianglea}). Thus the three in-neighbors of $x$ form a triangle. 

    Suppose that $a,b,c$ form a non-oriented triangle. Without loss of generality, assume that $ba,ca,cb \in E$. Let $y$ be the third in-neighbor of $a$. Then removing $x,a,b,c,y$ removes at least 17 arcs. Moreover, for every acyclic set $A'$ of $G' = G-x-a-b-c-y$, $A = A' + x + a$ is an acyclic set of $G$. By applying \Cref{prop:reduction} with the triple $(5, 17, 2)$, we find that $G$ is not a minimal counterexample to \Cref{thm:main} (see \Cref{subfig:degree6triangleb}). Thus the three in-neighbors of $x$ form an oriented triangle. Similarly, the three out-neighbors of $x$ form an oriented triangle.
\end{proof}

\begin{figure}[ht]
    \centering
    \begin{subfigure}[ht]{0.4\textwidth}
        \centering
        \includegraphics[scale = 0.5]{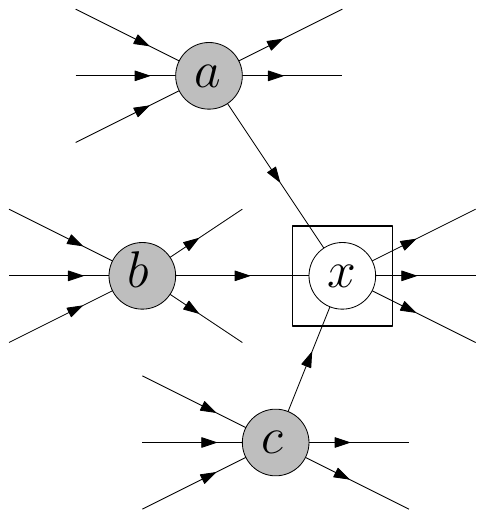}
        \caption{Case where $abc$ is not a triangle.}\label{subfig:degree6trianglea}
    \end{subfigure}
    \begin{subfigure}[ht]{0.58\textwidth}
        \centering
        \includegraphics[scale = 0.5]{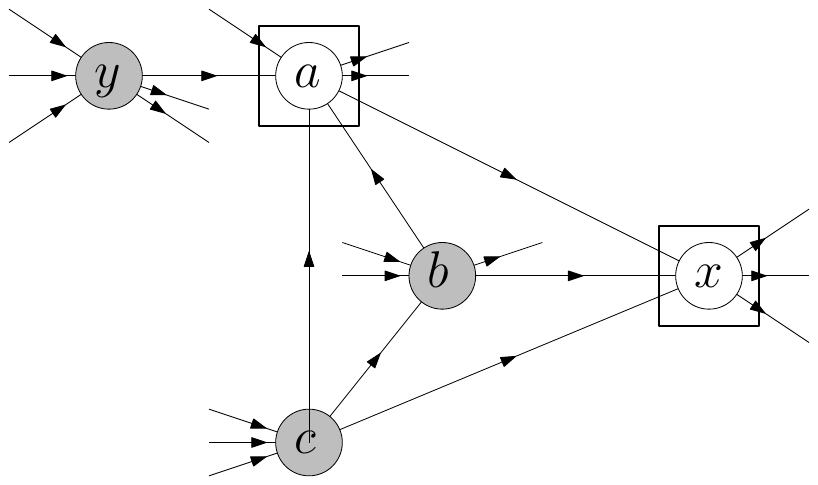}
        \caption{Case where $abc$ is a non-oriented triangle.}\label{subfig:degree6triangleb}
    \end{subfigure}
    \caption{Illustration of the proof of \Cref{lem:3OutNeighbors}.}
\end{figure}

\begin{lemma}\label{lem:degree6}
    $G$ contains no vertex of degree 6.
\end{lemma}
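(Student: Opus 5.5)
The plan is to show that the closed neighbourhood of a degree-6 vertex is forced to be a copy of $P^7$, and then to contradict \Cref{lem:hNul}. Let $x$ be a vertex of degree 6. By \Cref{lem:degree7,lem:degree5,lem:degree3,lem:degree2,lem:degree1}, every vertex of $G$ has degree 4 or 6. By \Cref{lem:degree6balanced}, $x$ has in-neighbours $a,b,c$ and out-neighbours $d,e,f$. By \Cref{lem:6arc4}, all six of them have degree 6, so each of them also has in- and out-degree 3 by \Cref{lem:degree6balanced}. By \Cref{lem:3OutNeighbors}, $a,b,c$ form an oriented triangle, say $a\to b\to c\to a$, and $d,e,f$ form an oriented triangle, say $d\to e\to f\to d$. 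Moreover, \Cref{lem:3OutNeighbors} applies to each of $a,\dots,f$ as well, which gives strong local information.

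Let $X=\{x,a,b,c,d,e,f\}$. Since every vertex of $X$ has degree 6, the number of arcs deleted with $X$ is $42-|E(G[X])|$. This quantity is at least $21$, with equality only if $G[X]$ is a tournament in which every vertex has in- and out-degree 3. In that case $G[X]$ is a regular tournament on 7 vertices in which the in- and out-neighbourhood of $x$ are oriented triangles. Using \Cref{lem:3OutNeighbors} at every vertex of $X$, I would check by a short case analysis on the arcs between $\{a,b,c\}$ and $\{d,e,f\}$ that this tournament is the Paley tournament. Since each vertex of $X$ then has all six of its neighbours inside $X$, $G[X]$ is a whole component of $G$ isomorphic to $P^7$, contradicting \Cref{lem:hNul}.

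Otherwise at least $22$ arcs are deleted, and I would apply \Cref{prop:reduction} with the triple $(7,22,3)$. This requires exhibiting, for every acyclic set $A'$ of $G-X$, three vertices $S\subseteq X$ such that $A'\cup S$ is acyclic.

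The safe way to do this is to make the vertices of $S$ successively sources or sinks of $G[A'\cup S]$, as in the previous lemmas. The vertex $x$ is always a candidate, since all its neighbours lie in $X$: taken together with at most one of its in-neighbours and at most one of its out-neighbours, it stays acyclic. The extra ingredient is the missing pair: since $G[X]$ is not a tournament, some pair $p,q\in X$ is non-adjacent. Then $p$ (or $q$) has an "extra" neighbour outside $X$ only in a controlled direction. I would choose $S$ as $x$ together with vertices whose outside out-arcs (or outside in-arcs) are absent, so that the chosen vertices become sinks (or sources) once the remaining vertices of $X$ are removed. For example, if $S=\{a,b,x\}$ and the single outside out-arc of $a$ or $b$ is instead an arc into $X$, then $x,b,a$ are peeled off in turn as sinks.

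\textbf{Main obstacle.} The step I expect to be hardest is this choice of $S$ when the deficiency is small, that is, exactly one internal arc is missing. Here one must do a case analysis on which pair of $X$ is non-adjacent, distinguishing pairs inside the triangles from pairs between $\{a,b,c\}$ and $\{d,e,f\}$, and on the orientations forced by \Cref{lem:3OutNeighbors} applied to $a,\dots,f$. If some case resists, I would fall back to a smaller reduction, such as $(5,17,2)$ or $(6,15,3)$ applied to $x$ with part of its neighbourhood, exploiting the outside neighbour created by the missing arc.
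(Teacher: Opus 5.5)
There is a genuine gap. Your framework is sound as far as it goes. Every vertex of $X$ has degree $6$ (Lemmas~\ref{lem:degree7}, \ref{lem:degree5} and~\ref{lem:6arc4}), so $42-|E(G[X])|$ arcs are deleted, the tournament case should lead to $P^7$, and otherwise $(7,22,3)$ is the right triple. But the non-tournament case, which you flag as the main obstacle, is never proved. You only propose a case analysis on the missing pair with an unspecified fallback.

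The principle you state for choosing $S$ is also partly wrong:
\begin{itemize}
\item $x$ together with an in-neighbour $p$ and an out-neighbour $q$ need not stay acyclic. The path $p\to x\to q$ closes a cycle as soon as $A'$ contains a path from an outside out-neighbour of $q$ to an outside in-neighbour of $p$.
\item Peeling $x,b,a$ as sinks from $\{x,a,b\}$ requires the third out-neighbour of \emph{both} $a$ and $b$ to lie in $X$, not of $a$ or $b$.
\end{itemize}

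The missing idea is that this last condition always holds, by Lemma~\ref{lem:3OutNeighbors} applied to the neighbours of $x$. This is legitimate because they have degree $6$.
\begin{itemize}
\item With $a\to b\to c\to a$, the out-neighbours of $a$ are $x$, $b$ and some third vertex $w$, and they span an oriented triangle. Since $b\to x$, this triangle must be $b\to x\to w\to b$, so $w\in\{d,e,f\}$.
\item Hence every out-arc of $a,b,c$ stays inside $X$, and dually every in-arc of $d,e,f$ does.
\item Consequently, for every acyclic $A'$ of $G-X$, the set $A'\cup\{x,a,b\}$ is acyclic. Indeed $x$ is a sink, then $b$ (its out-neighbours are $x$, $c$ and one of $d,e,f$), then $a$.
\end{itemize}
So whenever $G[X]$ is not a tournament, $(7,22,3)$ applies with no case analysis at all.

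The same forcing makes your tournament case short. The arcs from $\{a,b,c\}$ to $\{d,e,f\}$ form a perfect matching. Up to rotating the in-triangle, only two matchings remain, and checking them (as the paper does) gives either $P^7$ or a configuration with three non-adjacent pairs. The paper derives these two configurations explicitly and handles the non-$P^7$ one with $\{x_3,x_4,x_6\}$ and $(7,22,3)$. Once you add the observation above, your tournament/non-tournament split becomes a valid and slightly more uniform version of that argument.
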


\begin{proof}
    Let $x_0$ be a vertex of degree 6. Let $x_1, x_2, x_4$ be its three out-neighbors and $x_3, x_5, x_6$ be its three in-neighbors. By \Cref{lem:3OutNeighbors}, $x_1, x_2, x_4$ form an oriented triangle and $x_3, x_5, x_6$ form an oriented triangle. Without loss of generality, assume that $x_1 x_2, x_2 x_4, x_4 x_1 \in E$ and $x_3 x_5, x_5 x_6, x_6 x_3 \in E$ (see \Cref{subfig:degree6withoutplanarityA} for an illustration).

    The third in-neighbor of $x_1$ (respectively $x_2$, $x_4$) forms an oriented triangle with $x_0$ and $x_4$ (respectively $x_1$, $x_2$). Since we already have $x_0 x_4 \in E$ (respectively $x_0 x_1 \in E$, $x_0 x_2 \in E$), it must necessarily be an in-neighbor of $x_0$. Without loss of generality, assume that the third in-neighbor of $x_1$ is $x_6$, so $x_6 x_1 \in E$. By \Cref{lem:3OutNeighbors}, the 3 in-neighbors of $x_1$ form an oriented triangle, so $x_4 x_6 \in E$, and the 3 out-neighbors of $x_6$ form an oriented triangle, so $x_1 x_3 \in E$ (see \Cref{subfig:degree6withoutplanarityB}).

    \begin{figure}[ht]
        \centering
        \begin{subfigure}[ht]{0.48\textwidth}
            \centering
            \includegraphics[scale = 0.5]{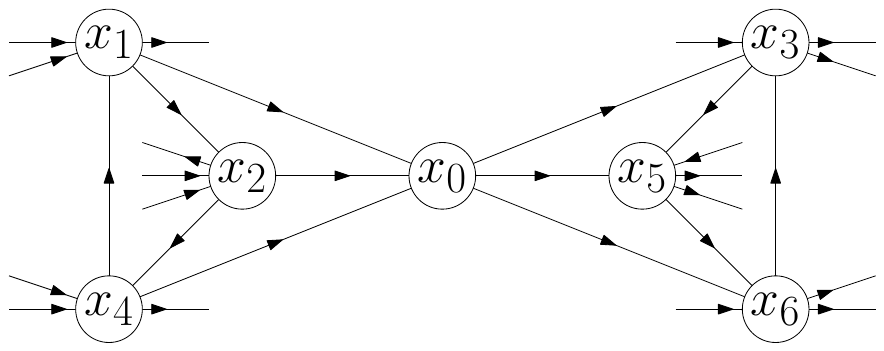}
            \caption{In- and out-neighbors of $x_0$.}\label{subfig:degree6withoutplanarityA}
        \end{subfigure}
        \begin{subfigure}[ht]{0.48\textwidth}
            \centering
            \includegraphics[scale = 0.5]{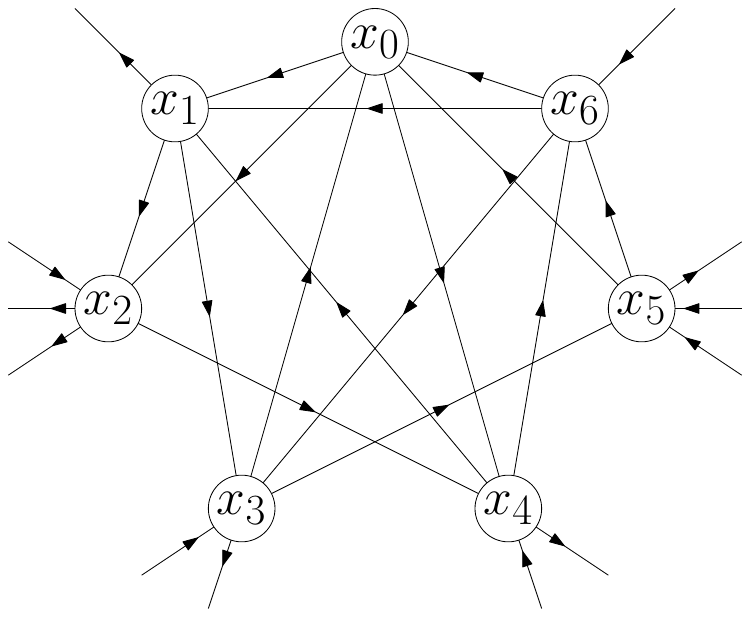}
            \caption{New arcs assuming $x_6 x_1 \in E$.}\label{subfig:degree6withoutplanarityB}
        \end{subfigure}
        \caption{Illustration of the arcs and vertices defined in the proof of \Cref{lem:degree6}}
    \end{figure}

    We know that the third in-neighbors of $x_2$ and $x_4$ are respectively $x_5$ and $x_3$. We then have two possibilities: either $x_5 x_2 \in E$ and $x_3 x_4 \in E$, or $x_3 x_2 \in E$ and $x_5 x_4 \in E$.

    \begin{claim}\label{claim:degre6sansplanariteclaim1}
        $x_5 x_2 \in E$ and $x_3 x_4 \in E$
    \end{claim}
    \begin{claimproof}[Proof of \Cref{claim:degre6sansplanariteclaim1}]
        Suppose that $x_3 x_2 \in E$ and $x_5 x_4 \in E$. Since the 3 out-neighbors of $x_3$ form an oriented triangle, we have $x_2 x_5 \in E$.

        Note that the last out-neighbor of $x_1$ is not some $x_i$. Indeed, the only $x_i$ with in-half arcs are $x_3, x_5, x_6$, and $x_3$ and $x_6$ are already adjacent to $x_1$. Finally, the arc $x_1 x_5$ cannot be in $E$ because otherwise the 3 out-neighbors of $x_1$ would not form an oriented triangle.

        We can reason similarly for all other vertices $x_i$ for $i \in \{2,3,4,5,6\}$ to justify that the last out-neighbors of $x_2, x_4$ and the last in-neighbors of $x_3, x_5, x_6$ are not some $x_i$. Thus removing $x_0, x_1, x_2, x_3, x_4, x_5, x_6$ removes at least 22 arcs. Moreover, for every acyclic set $A'$ of $G' = G-x_0-x_1-x_2-x_3-x_4-x_5-x_6$, $A = A' + x_3 + x_4 + x_6$ is an acyclic set of $G$. By applying \Cref{prop:reduction} with the triple $(7, 22, 3)$, we find that $G$ is not a minimal counterexample to \Cref{thm:main} (see \Cref{subfig:degree6withoutplanarityC}). Thus $x_5 x_2 \in E$ and $x_3 x_4 \in E$.
    \end{claimproof}

    By \Cref{claim:degre6sansplanariteclaim1}, $x_5 x_2 \in E$ and $x_3 x_4 \in E$. Since the 3 in-neighbors of $x_2$ (respectively $x_4$) form an oriented triangle, we have $x_1 x_5 \in E$ (respectively $x_2 x_3 \in E$). Since the 3 out-neighbors of $x_3$ (respectively $x_5$) form an oriented triangle, we have $x_4 x_5 \in E$ (respectively $x_2 x_6 \in E$).

    We then obtain a graph isomorphic to $P^7$ induced by the vertices $x_0, x_1, x_2, x_3, x_4, x_5, x_6$ (see \Cref{subfig:degree6withoutplanarityD}). This contradicts the fact that $h(G) = 0$ (\Cref{lem:hNul}), and we conclude that $G$ contains no vertex of degree 6.
\end{proof}

\begin{figure}[ht]
    \begin{subfigure}[ht]{0.48\textwidth}
        \centering
        \includegraphics[scale = 0.5]{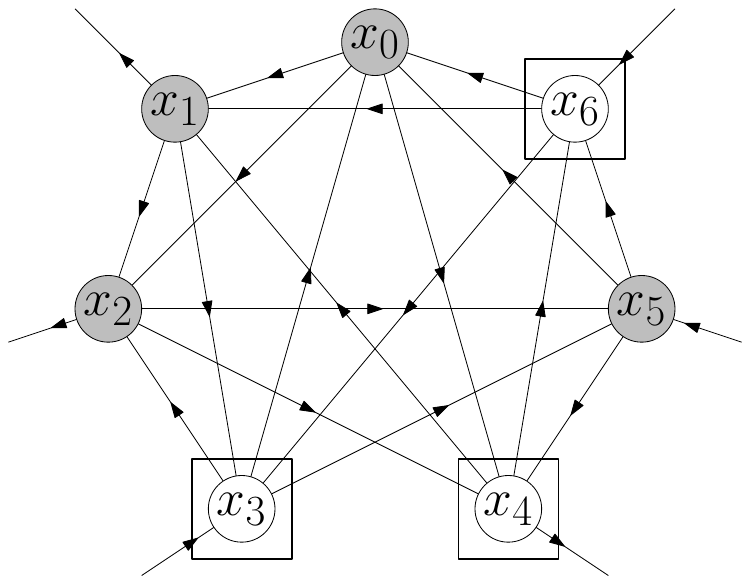}
        \caption{Case where $x_3 x_2 \in E$ and $x_5 x_4 \in E$.}\label{subfig:degree6withoutplanarityC}
    \end{subfigure}
    \begin{subfigure}[ht]{0.48\textwidth}
        \centering
        \includegraphics[scale = 0.5]{Figures/degree6finalD.pdf}
        \caption{Case where $x_5 x_2 \in E$ and $x_3 x_4 \in E$.}\label{subfig:degree6withoutplanarityD}
    \end{subfigure}
    \caption{Illustration of the proof of \Cref{lem:degree6}.}
\end{figure}

\subsection{End of the proof of \Cref{thm:main}}

From \Cref{lem:degree7,lem:degree1,lem:degree2,lem:degree3,lem:degree5,lem:degree6} we deduce that $G$ is 4-regular and thus $m = 2n$. To conclude we use the \Cref{thm:AGLYZ} recently obtained by Ai, Gutin, Liu, Yeo and Zhou~\cite{AGLYZ25} that states that every oriented graph $G$ with maximum degree at most 4 has a feedback vertex set of size at most $\frac{3n}{7}$.

Thus $a(G) \ge \frac{4n}{7} > \frac{5n}{9} = \frac{7n - m}{9}$, which contradicts the fact that $G$ is a counterexample to \Cref{thm:main}.

\section{Conclusion}
To establish the formula of \Cref{thm:main}, we started from the formula of \Cref{cor:planar}. Trying to relax the planarity assumption, the only counterexamples to the formula of \Cref{cor:planar} appear to be the graphs in $\mathcal{H}$. This motivated the addition of the parameter $h$, which counts the number of connected components of $G$ belonging to the family $\mathcal{H}$. In fact, with the planarity assumption, \Cref{cor:planar} can be proved more quickly by deducing from \Cref{lem:6arc4} the absence of degree 6 vertices in a minimal counterexample, observing that 6-regular planar graphs do not exist.

Note that the formula of \Cref{cor:planar} is tight: Knauer, Valicov and Wenger~\cite{KVW17} constructed an infinite family of planar oriented graphs $G$ with $\fv(G)=\frac{n-1}{2}$ and $m=\frac{5}{2}n-\frac{9}{2}$, which indeed yields $\fv(G)=\frac{2n+m}{9}$. More generally, for every $g \ge 3$, the authors of~\cite{KVW17} proposed a construction of infinite families of planar oriented graphs $G$ of digirth $g$ with $\fv(G)=\frac{n-1}{g-1}$.For $g \ge 12$, better constructions were recently obtained by the author together with Pinlou and Valicov~\cite{DPV26}: infinite families of planar oriented graphs $G$ of digirth $g$ with $\fv(G)=\frac{(g+2)n-2g}{g^2}$. Furthermore, they conjectured that these are the planar oriented graphs of digirth $g$ that maximize the ratio $\frac{\fv(G)}{n}$. Interestingly, all the above mentioned graphs (in~\cite{KVW17} and~\cite{DPV26}) satisfy $\fv(G)=\frac{2n+m}{3g}$, which suggests that the formula of \Cref{cor:planar} could be generalized to planar oriented graphs of digirth $g$ by replacing 9 with $3g$.

\begin{conjecture}\label{conj:3g}
    Every planar oriented graph $G$ of digirth $g$ satisfies $\fv(G) \le \frac{2n+m}{3g}$.
\end{conjecture}

We would like to notice that \Cref{conj:3g} combined with \Cref{thm:DPV26} would imply that $\fv(G) \le \frac{(g+2)n - 2g}{g^2}$ for planar oriented graphs of digirth $g \ge 3$ (the case $g = 3$ being the \Cref{cor:planar}), and this would be tight for $g \ge 12$ according to the constructions given in~\cite{DPV26}.

\bibliographystyle{plain}
\bibliography{biblio}

\end{document}